\documentclass[10pt]{amsart}
\usepackage[dvips]{graphicx}
\usepackage{epsf}

\newcommand{\cyr}{\renewcommand\rmdefault{wncyr}%
                  \renewcommand\sfdefault{wncyr}%
                  \renewcommand\encodingdefault{OT2}%
                  \normalfont\selectfont}
\DeclareTextFontCommand{\textcyr}{\cyr}

\newtheorem{theorem}{Theorem}[section]
\newtheorem{lm}[theorem]{Lemma}

\newtheorem{prop}[theorem]{Proposition}
\theoremstyle{definition}

\theoremstyle{remark}

\def\theequation{\arabic{section}.\arabic{equation}}
\def\thetheorem{\arabic{section}.\arabic{theorem}}

\def\De{\Delta}

\def\al{\alpha}
\def\be{\beta}
\def\ga{\gamma}
\def\la{\lambda}
\def\ka{\kappa}
\def\om{\omega}
\def\pa{\partial}
\def\fhi{\varphi}
\def\eps{\varepsilon}
\def\de{\delta}

\def\tr{\mbox{\rm tr }}
\def\na{\nabla}

\def\sgn{\mbox{\rm sgn\,}}

\def\ints{\int\limits}

\def\sums{\sum\limits}
\def\ds{\displaystyle}
\def\ovr{\overline}
\def\dv{\mbox{\rm div}}
\def\lang{\langle}
\def\rang{\rangle}
\def\re{\mbox{Re}}
\def\im{\mbox{Im}}
\def\nr{\Vert}

\def\mute{\cdot}
\def\hh{\widehat{h}}
\newcommand{\e}{\` e}
\newcommand{\ust}{{ustep}}
\newcommand{\vst}{{vstep}}
\newcommand{\id}{\mbox{Id}}
\newcommand{\arcsinh}{\mbox{arcsinh}}


  \begin{document}                                                 
    \title[]{On complex-valued 2D eikonals. \\
Part four: continuation past a caustic}                                 
    \author[R. Magnanini]{Rolando Magnanini}                                
    \address{Dipartimento di Matematica ``U.~Dini'',
Universit\` a di Firenze, viale Morgagni 67/A, 50134 Firenze, Italy.}                                    
    \email{magnanin@math.unifi.it}                                      
    \urladdr{http://www.math.unifi.it/users/magnanin}                                    
    \thanks{}   
\author[G. Talenti]{Giorgio Talenti}
    \address{Dipartimento di Matematica ``U.~Dini'',
Universit\` a di Firenze, viale Morgagni 67/A, 50134 Firenze, Italy.}           \email{talenti@math.unifi.it}    
    \keywords{Eikonal, caustics, evanescent waves, nonlinear partial differential
systems, initial value problems, hodograph transforms, viscosity, quasi-reversibility, 
approximate differentiation.}                                   
    \subjclass{Primary, 35F25, 35Q60, 78A05; Secondary, 65D25.}                                  
\begin{abstract}
Theories of monochromatic high-frequency electromagnetic fields have been designed by Felsen, Kravtsov, Ludwig 
and others with a view to portraying features that are ignored by geometrical optics. 
These theories have recourse to eikonals that encode information on both phase and amplitude --- in other words, are complex-valued. 
The following mathematical principle is ultimately behind the scenes: 
any geometric optical eikonal, which conventional rays engender in some light region, 
can be consistently continued in the shadow region beyond the relevant caustic, provided an alternative eikonal, 
endowed with a non-zero imaginary part, comes on stage. 
\par
In the present paper we explore such a principle in dimension $2.$ 
We investigate a partial differential system that governs the real and the imaginary parts 
of complex-valued two-dimensional eikonals, and an initial value problem germane to it. 
In physical terms, the problem in hand amounts to detecting waves that rise beside, but on the dark side of, a given caustic. 
In mathematical terms, such a problem shows two main peculiarities: on the one hand, 
degeneracy near the initial curve; on the other hand, ill-posedness in the sense of Hadamard. 
We benefit from using a number of technical devices: hodograph transforms, artificial viscosity, and a suitable discretization. 
Approximate differentiation and a parody of the quasi-reversibility method are also involved. 
We offer an algorithm that restrains instability and produces effective approximate solutions.
\end{abstract}                
    
\maketitle  

\section{Introduction}

\subsection{}
Geometrical optics fits well a variety of issues, but especially survives as
an asymptotic theory 
of monochromatic high-fre\-quen\-cy electromagnetic fields ---
\cite{BB}, \cite{Dui}, \cite{FKN}, \cite{GS}, \cite{Jo}, \cite{Ke}, \cite{KL}, \cite{Kli-1} and \cite{Kli-2}, \cite{KO-1}, \cite{Lun}, \cite{MO}, \cite{Ra} 
are selected apropos references. 
Generalizations have been worked out by Felsen, Kravtsov, Ludwig and their followers --- 
see e.g. \cite{CF-1} and \cite{CF-2}, \cite{EF}, \cite{Fe-1} and \cite{Fe-2}, \cite{HF}, 
\cite{K-1},\cite{K-2}, \cite{K-3} and \cite{KFA}, \cite{LBL}, \cite{Lud-1} and \cite{Lud-2}, or consult \cite{BM}, \cite{CLOT}, \cite{KO-2}. 
One is enough for successfully modeling basic optical processes, 
such as the propagation of light and the development of caustics. 
The others embrace geometrical optics and are additionally apt to account for certain optical phenomena 
--- for instance, the rise of evanescent waves past a caustic --- that are 
beyond the reach of geometrical optics. 
A leitmotif of these is allowing a keynote parameter to adjust itself to  
a standard equation, and simultaneously take complex values.
\par
The following partial differential equation
\begin{equation}
\label{eik}
\left(\frac{\pa w}{\pa x}\right)^2+\left(\frac{\pa w}{\pa y}\right)^2=n^2(x,y)
\end{equation}
underlies the mentioned theories in case the spacial dimension is $2.$ Here $x$ and $y$ denote {\it rectangular coordinates} in the Euclidean plane; 
$n$ is a {\it real-valued, strictly positive} function of $x$ and $y;$ $w$ is allowed to take both {\it real} and {\it complex} values. 
Function $n$ represents the {\it refractive index} of an appropriate (isotropic, non-conducting) two-dimensional medium ---
its reciprocal stands for velocity of propagation. Function $w$ is named {\it eikonal} according to usage, and relates 
to the asymptotic behavior of an electromagnetic field as the wave number grows large --- the real part of $w$ accounts for oscillations, 
the imaginary part of $w$ accounts for damping. 
Geometrical optics deals exclusively with real-valued eikonals, 
complex-valued eikonals distinguish a more advanced context.
Throughout the present paper we assume the refractive index is conveniently smooth, 
and consider sufficiently smooth eikonals.
\par
The following partial differential system
\begin{equation}
\label{sys}
\begin{array}{cc}
&u_x^2+u_y^2-v_x^2-v_y^2=n^2(x,y)\\
\\
&u_x v_x +u_y v_y=0
\end{array}
\end{equation}
governs those complex-valued solutions to \eqref{eik} that obey
\begin{equation*}
\label{defw}
u=\re\, w, \ \ v=\im\, w.
\end{equation*}
Observe the architecture of \eqref{sys}: 
gradients are involved through their orthogonal invariants  
--- lengths and inner product --- only. Observe also the following properties, which result from a standard test and easy algebraic manipulations. 
First, system \eqref{sys} is {\it elliptic-parabolic} or {\it degenerate elliptic.} Second, a solution array $[u\  v]$ to \eqref{sys} 
is {\it elliptic} if and only if its latter entry $v$ is free from critical points.
\par
The {\it B\"acklund transformation,} which relates $u$ and $v$ thus
\begin{equation}
\label{back}
\begin{array}{cc}
&\ds\na v= f
\left[
\begin{array}{cc}
0 &-1\\
1 & 0\end{array}
\right]\,\na u, \\ 
&\ds f^2=1-\frac{n^2}{|\na u|^2}, \ \ \sgn f=\sgn(u_x v_y-u_y v_x),
\end{array}
\end{equation}
and implies both
$$
|\na u|\ge n
$$
and
\begin{equation}
\label{eqdivu}
\dv\left\{\sqrt{1-\frac{n^2}{|\na u|^2}}\,\,\na u\right\}=0,
\end{equation}
is another, decoupled form of \eqref{sys}. 
\par
System \eqref{sys} discloses two scenarios --- the former is tantamount 
to conventional geometrical optics, the latter opens up new vistas. 
Either the following equations
$$
u_x^2+u_y^2=n^2 \ \mbox{ and } \ v_x=v_y=0
$$
hold, or the following inequalities
$$
|\na u|>n \ \mbox{ and } \ |\na v|>0,
$$
and the following equations prevail.
\begin{eqnarray}
\label{invback}
\na u= \frac1{f}
\left[
\begin{array}{cc}
0 &1\\
-1 & 0\end{array}
\right]\,\na v, \quad \frac1{f^2}=1+\frac{n^2}{|\na v|^2}, 
\end{eqnarray}
\begin{equation}
\label{eqdivv}
\dv\left\{\sqrt{1+\frac{n^2}{|\na v|^2}}\,\,\na v\right\}=0,
\end{equation}
\begin{equation}
\label{eqnondiv}
(|\na u|^4-n^2 u_y^2) u_{xx}+2 n^2 u_x u_y u_{xy}+(|\na u|^4-n^2 u_x^2) u_{yy}=
n |\na u|^2 \lang\na n, \na u\rang,
\end{equation}
\begin{equation}
\label{eqnondivv}
(|\na v|^4+n^2 v_y^2) v_{xx}-2 n^2 v_x v_y v_{xy}+(|\na v|^4+n^2 v_x^2) v_{yy}+
n |\na v|^2 \lang\na n, \na v\rang=0.
\end{equation}
\par
Equations \eqref{invback} invert the B\"acklund transformation 
mentioned above, and imply \eqref{eqdivv}. \eqref{eqnondiv} and \eqref{eqnondivv} are quasi-linear partial differential equations of the second order in non-divergence form.
The former has a {\it mixed elliptic-hyperbolic} character: 
a solution $u$ is {\it elliptic} or {\it hyperbolic} depending on whether the length of $\na u$ exceeds, 
or is smaller than $n.$ The latter is {\it elliptic-parabolic} or {\it degenerate elliptic:} 
a solution $v$ such that $\na v$ is free from zeros is {\it elliptic,} 
degeneracy occurs at the {\it critical points} of $v.$
\par
Any real-valued sufficiently smooth solution to \eqref{eqdivu} satisfies \eqref{eqnondiv}. 
A real-valued function is an elliptic solution to \eqref{eqnondiv} if and only if 
it coincides with the former entry of an elliptic solution to \eqref{sys}. 
A real-valued function $u,$ smooth and without critical points, 
is a hyperbolic solution to \eqref{eqnondiv} if and only if 
two real-valued smooth functions $\fhi$ and $\psi$ exist such that
$$
\fhi_x^2+\fhi_y^2=n^2, \ \ \psi_x^2+\psi_y^2=n^2, \ \ \fhi_x \psi_y -\fhi_y \psi_x\not=0,
$$
and
$$
u=\frac12 (\fhi+\psi).
$$
\par
Any real-valued, sufficiently smooth solution to \eqref{eqdivv} satisfies \eqref{eqnondivv}. Any {\it elliptic} solution to \eqref{eqnondivv} satisfies \eqref{eqdivv}. A solution to \eqref{eqnondivv} 
{\it need not} satisfy \eqref{eqdivv}: for instance, perfectly smooth solutions 
to \eqref{eqnondivv} exist whose gradient vanishes exclusively in a set of measure $0,$ and which make the left-hand side of \eqref{eqdivv} a non-zero distribution.
\par
Let $J$ be endowed with an appropriate domain and obey
$$
J(v)=\iint j\left(\frac{|\na v|}{n}\right) n^2\, dx dy
$$
for any $v$ from that domain --- here $j$ is the arc length along a parabola, videlicet
$$
j(\rho)=\frac{\rho}{2}\sqrt{1+\rho^2}+\frac12\log(\rho+\sqrt{1+\rho^2})
$$
for any real $\rho.$ 
The following properties hold. (i) $J$ is convex, coercive and sub-differentiable, 
but not Fr\'echet-differentiable. (ii) Any critical point of $J,$ i.e. any function $v$ such that
$$
\pa J(v)\ni 0,
$$
satisfies \eqref{eqdivv} in any open set $\mathcal O$ such that
$$
\mathcal O \mbox{ is essentially contained in }
\{ (x,y)\in \mbox{domain of } v : \na v(x,y)\not= 0\}.
$$
Consequently, a critical point of $J$ solves a {\it free-boundary problem} 
for equation \eqref{eqdivv} --- the relevant free boundary is
$$
(\mbox{domain of } v) \cap \pa \{ (x,y)\in \mbox{domain of } v : \na v(x,y)\not= 0\}
$$
and plays the role of a {\it caustic.} (ii) Any function $v$ such that
$$
J(v)= \mbox{minimum}
$$
satisfies \eqref{eqnondivv} in an appropriate {\it viscosity sense.} 
In other words, a minimizer of $J$ solves in a generalized sense 
a {\it boundary value problem} for equation \eqref{eqnondivv}.
\par
An early treatment of \eqref{sys} traces back to \cite{ER}. 
Further apropos information is offered in \cite{MaTa1}, \cite{MaTa2}, \cite{MaTa3}, and \cite{MaTa4}, 
where solutions in closed form, qualitative features, exterior boundary value problems, 
related free boundaries, variational and viscosity methods are discussed.

\subsection{}
Geometrical optics ultimately amounts to manipulating: 
(i) the Riemannian metric known as {\it travel time,} videlicet
$$
n(x,y)\sqrt{dx^2 + dy^2};
$$
(ii) the members of appropriate one-parameter families of travel time geodesics ---
called {\it rays;} (iii) the envelopes of rays --- called {\it caustics.}
\par
Geometric optical eikonals are entirely controlled by rays. 
They shine in light regions (those spanned by relevant rays), burn out beside caustics 
(where the ray system breaks down), 
and shut down in shadow regions (that rays avoid).
As a consequence, geometrical optics is unable 
to account for any optical process that takes place beyond a caustic, on the dark side of it. 
Essentials of two-dimensional geometrical optics, which are instrumental here, are outlined in Appendix A.
\par
Complex-valued eikonals are more flexible. 
As the cited work of Felsen, Krav\-tsov and Ludwig may prompt, 
complex-valued eikonals look apt to {\it consistently continue} geometric optical 
cognates {\it into shadow regions.} Such a continuation is the main theme of the present paper.
\par
Let us pave the way by heuristically considering  the case where refractive index $n$ is $1.$ 
A classical recipe informs how general geometric optical eikonals can be cooked up: 
start from a complete integral, derive a one-parameter family of solutions, 
take the relevant envelope, and shake well. Let  $f$  be an arbitrary, 
but sufficiently smooth, real function. The following pair
\begin{equation}
\label{pair}
w=x \cos t+y \sin t +f(t), \quad 0=-x \sin t +y \cos t +f'(t)
\end{equation}
causes $w$ and $t$ to enjoy the following properties: 
$$
\left(\frac{\pa w}{\pa x}\right)^2+\left(\frac{\pa w}{\pa y}\right)^2=1,
$$
the pertinent {\it eikonal equation} governing $w;$
$$
\cos t\, \frac{\pa t}{\pa x}+\sin t\, \frac{\pa t}{\pa y}=0,
$$
a {\it Burgers-type equation} governing $t;$
$$
w_x t_x+w_y t_y=0,
$$
showing that the gradients of $w$ and $t$ are {\it orthogonal;}
$$
w_x=\cos t, \quad w_y=\sin t,
$$
a {\it B\"acklund transformation} further relating $w$ and $t.$ 
Both the rays of $w$ and the level lines of $t$ are the straight-lines where
$$
-x \sin t +y \cos t +f'(t)=0
$$
and $t$ equals a constant. Such straight-lines span the {\it light region} 
and envelope the {\it caustic.} We have
$$
 -f'= \mbox{the support function of the caustic,}
$$
and 
$$
x=f''(t) \cos t+f'(t) \sin t, \ \ y=f''(t) \sin t-f'(t) \cos t, \ \ w=f(t)+f''(t)
$$
along the caustic. Therefore the second-order derivatives of $w$ and the gradient of $t$ 
simultaneously blow up there --- in particular, the caustic of $w$ is also the shock-line of $t.$
\par
We claim that both $w$ and $t$ can be {\it continued beyond the caustic,} 
in a subset of the shadow region, if {\it complex values} are allowed. 
Suppose $f$ is analytic --- so as it can be continued by a holomorphic function of a complex variable. Let
$i=\sqrt{-1},$ the unit imaginary number. Let $u, v, \la, \mu$ be real; put the following equations
$$
w=u+iv, \ \ t=\la+i\mu,
$$
and equations \eqref{pair} together, but force \eqref{pair} to produce real $x$ and $y.$ The following formulas
$$
x=\frac{\sin\la}{\cosh\mu}\,\re f'(t)+\frac{\cos\la}{\sinh\mu}\,\im f'(t), \ \
y=-\frac{\cos\la}{\cosh\mu}\,\re f'(t)+\frac{\sin\la}{\sinh\mu}\,\im f'(t),
$$
$$
u=\re f'(t)+(x \cos\la+y \sin\la)  \cosh\mu, \ v=\im f'(t)+(-x \sin\la+y \cos\la)  \sinh\mu,
$$
ensue, then an inspection testifies that the claimed continuation ensues too.
\par
Incidentally, we have also shown that solutions to the non-viscous Burgers equation 
can be continued past the shock-line by suitable complex-valued 
solutions to the same equation. Let us mention that complex-valued solutions 
to viscous and non-viscous Burgers equation 
are dealt with in \cite{KWYZ}, \cite{PS} and \cite{KeOk}.
A more exhaustive analysis is carried out in Appendix B.

\subsection{}
A certain initial value problem for system \eqref{sys} --- 
the one described in items (i) and (ii) below, and in figure 1 --- summarizes our purposes.

\begin{figure}[tbh]
\label{fig:caustica}
\centerline{
{\epsfxsize=3.5in
\epsfbox{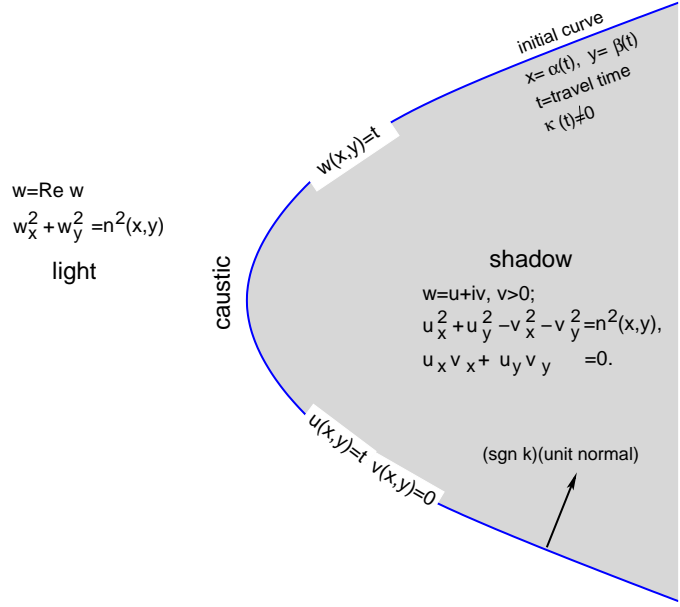}
} 
}
\caption{A geometric optical eikonal and its
continuation past a caustic.}
\end{figure}

\par
(i) An initial curve IC is given. The following alternative applies: 
either IC is specified {\it exactly} --- no error infects the definition of IC; 
or else IC is a {\it phantom} --- some coarse and polluted sampling of IC is gotten.
 In the former case assume IC is smooth enough. 
In the latter case {\it recover} IC, i.e. fed the available data 
into an appropriate denoising process, and then elect the consequent output 
as an operative substitute of IC --- ad hoc tools are provided in Appendix C.
\par
Represent IC (either the authentic one, or else its surrogate) by the following equations
\begin{equation}
\label{curve}
x=\al(t), \quad y=\be(t), 
\end{equation}
and adjust parameter $t$ so as
\begin{eqnarray*}
\label{ttime}
t=\mbox{\it a travel time}
\end{eqnarray*}
without any loss of generality.
\par
Assume travel time is an extra metric in action and the relevant {\it geodesic curvature} of IC is 
{\it free from zeroes.} In other words, postulate that \eqref{curve} and either of the following equations
\begin{equation*}
\label{geodesic1a}
\ka\, (\mbox{velocity})^3=\mbox{Geodesic curvature},
\end{equation*}
\begin{equation*}
\label{geodesic1}
\ka\, (\mbox{velocity})^2=\mbox{Euclidean curvature}-\langle \mbox{unit normal}, \na\log n(x,y) \rangle
\end{equation*}
result in
\begin{equation*}
\label{denonvanish}
\ka \mbox{ \it vanishes nowhere}.
\end{equation*}
(ii) A pair $[u\, v]$ is sought that obeys system \eqref{sys} and fulfills the following conditions. First,
\begin{equation}
\label{initial}
u(x,y)=t, \quad v(x,y)=0,
\end{equation}
if $x,y$ and $t$ are subject to \eqref{curve}.
Second, $u$ and $v$ are defined in the side of IC that
$$
(\sgn\, \ka)\times(\mbox{unit normal to IC})
$$
points to.
\par
Arguments from Appendix A allow us to comment as follows. 
IC and the mentioned side of it can be viewed as a {\it caustic} and a {\it shadow region,} respectively. 
Any geometric optical eikonal, which makes IC a caustic, lives in the illuminated side of IC; 
the complex-valued eikonal, whose real part is $u$ and whose imaginary part is $v,$ 
lives in the opposite, dark side of IC; both equal a travel time along IC. 
An {\it extension} of the geometric optical eikonal in hand ensues. 
Such an extension does obey the eikonal equation, 
lives in both the light region and a subset of the shadow region, 
and takes complex values where shadow prevails. 
In physical terms, problem (i) and (ii) accepts a caustic in input, then models 
{\it evanescent waves} that rise in the dark side of it.
\par
In the present paper we focus our attention on solutions $[u\, v]$ to the problem (i) and (ii) that meet the following extra requirements:
\par\noindent
(iii) they are {\it elliptic;}
\par\noindent
(iv) their Jacobian determinant obeys
$$
\sgn(u_x v_y-u_y v_x)=\sgn\ka.
$$
Condition (iii) ensures that the latter entry $v$ is {\it not constant;}
as will emerge from subsequent developments, initial conditions 
plus conditions (iii) and (iv) ensure that the same entry is {\it nonnegative.}
\par
The solutions to problem (i)-(iv) develop {\it singularities} near IC, as any 
geometric optical eikonal does in the vicinity of the relevant caustic. 
We will show that they obey the following expansions
\begin{equation}
\label{shadow}
u(x,y)=s+o(r), \
v(x,y)=\frac{2\sqrt{2}}{3}|\ka(s)|^{1/2}|r|^{3/2}+o(r^{3/2})
\end{equation}
as $(x,y)$ belongs to the appropriate side of, and is close enough to IC. Here $r$ and $s$ 
stand for the curvilinear coordinates described in Appendix A --- 
informally, $r$ is a signed distance from IC, $s$ is a lifting of a travel time inherent to IC.
Note the physical meaning of \eqref{shadow}: the damping effects, which are encoded in
the imaginary part of a complex-valued eikonal, are tuned by the geodesic curvature 
of the relevant caustic.

\setcounter{equation}{0}
\setcounter{theorem}{0}

\section{Framework}

\subsection{}
A convenient coordinate system must be called for. We choose to recast \eqref{sys} 
by reversing the roles of dependent and independent variables --- i.e. we think
of $u$ and $v$ as curvilinear coordinates, and think of $x$ and $y$ as functions of $u$ and $v.$ 
In other words, we subject \eqref{sys} to the change of variables that is called {\it hodograph 
transformation} at times --- see \cite{Eva} and \cite{Z}, for instance. 
\par
Let $[u\, v]$ be any smooth elliptic solution to \eqref{sys}, and observe the following.
\par\noindent
(i) The level lines of $u$ and those of $v$ are free from singular points, 
and cross at a right angle. Moreover, the Jacobian determinant 
\begin{equation}
\label{jac}
u_x v_y-u_y v_x \mbox{ vanishes nowhere.} 
\end{equation}
\par
Let
\begin{eqnarray*}
\label{mappings}
[u\,v]\mapsto [x(u,v)\,y(u,v)] \mbox{ be a local {\it inverse} of } 
[x\,y]\mapsto [u(x,y)\,v(x,y)],
\end{eqnarray*}
then observe the following equation
\begin{equation*}
\label{jacjac}
(u_x v_y-u_y v_x)(x_u y_v-x_v y_u)=1
\end{equation*}
and the propositions (ii)-(iv) below.
\par\noindent
(ii) The following partial differential system holds
\begin{equation}
\label{firstform}
1/E-1/G=1, \ \ F=0.
\end{equation}
Here
$$
E=n^2(x,y) (x_u^2+y_u^2), \ F=n^2(x,y) (x_u x_v+y_u y_v), \ G=n^2(x,y) (x_v^2+y_v^2)
$$
--- in other words,
$$
n^2(x,y)((dx)^2+(dy)^2)=E (du)^2+2 F du\,dv+G (dv)^2.
$$
\par\noindent
(iii) The following systems and equations hold
\begin{equation}
\label{matrixsys}
\frac{\pa}{\pa u}\left[\begin{array}{cc}
x\\
y
\end{array}\right]=
f\,\left[\begin{array}{cc}
0 &1\\
-1 &0
\end{array}\right]
\frac{\pa}{\pa v}\left[\begin{array}{cc}
x\\
y
\end{array}\right],
\end{equation}
\begin{equation}
\label{matrixsys2}
\frac{\pa}{\pa v}\left[\begin{array}{cc}
x\\
y
\end{array}\right]=
\frac1{f}\,\left[\begin{array}{cc}
0 &-1\\
1 &0
\end{array}\right]
\frac{\pa}{\pa u}\left[\begin{array}{cc}
x\\
y
\end{array}\right],
\end{equation}
\begin{equation}
\label{deff}
\begin{array}{c}
\ds\sgn f=\sgn(x_u y_v-x_v y_u), \\
\ds\frac1{f^2}=1+n^2(x,y)(x_v^2+y_v^2), \ \
f^2=1-n^2(x,y)(x_u^2+y_u^2).
\end{array}
\end{equation}
\par\noindent
(iv) The following equations hold
\begin{equation}
\label{deff2}
f^2=\frac{1-n^2(x,y)x_u^2}{1+n^2(x,y)x_v^2}, \ f^2=\frac{1-n^2(x,y)y_u^2}{1+n^2(x,y)y_v^2},
\end{equation}
\begin{equation}
\label{matrixsys3}
\frac{\pa^2}{\pa u^2}\left[\begin{array}{cc}
x\\
y
\end{array}\right]+f^4 \frac{\pa^2}{\pa v^2}\left[\begin{array}{cc}
x\\
y
\end{array}\right]=
(1-f^2)\,\left[\begin{array}{cc}
x_u^2-y_u^2 &-2 x_u y_u\\
2 x_u y_u &x_u^2-y_u^2
\end{array}\right]
\na\log n(x,y)
\end{equation}
--- in the event that $n$ is identically $1,$ these equations read
\begin{equation}
\label{matrixsys3n1}
x_{uu}+\left(\frac{1-x_u^2}{1+x_v^2}\right)^2 x_{vv}=0, \ \ y_{uu}+\left(\frac{1-y_u^2}{1+y_v^2}\right)^2 y_{vv}=0.
\end{equation}

\begin{proofA}
System \eqref{sys} tells us that
$$
|\na u|>0,
$$
and that the gradients of $u$ and $v$ are orthogonal. An assumption gives
$$
|\na v|>0.
$$
The following equation
$$
(u_x v_y-u_y v_x)^2=|\na u|^2 |\na v|^2-\lang\na u, \na v\rang^2
$$
concludes the proof.
\end{proofA}

\begin{proofB}
Since
\begin{eqnarray*}
&\ds\frac{\pa(u,v)}{\pa(x,y)}\times\frac{\pa(x,y)}{\pa(u,v)}=\left[\begin{array}{cc}
1 &0\\
0 &1
\end{array}\right],\\
&\ds\frac{\pa(u,v)}{\pa(x,y)}\times\left[\frac{\pa(u,v)}{\pa(x,y)}\right]^T=
\left[
\begin{array}{cc}
|\na u|^2 &\lang\na u, \na v\rang\\
\lang\na u, \na v\rang &|\na v|^2
\end{array}
\right],\\
&\ds\left[\frac{\pa(x,y)}{\pa(u,v)}\right]^T\times\frac{\pa(x,y)}{\pa(u,v)}=
\left[
\begin{array}{cc}
x_u^2+y_u^2 &x_u x_v+y_u y_v\\
x_u x_v+y_u y_v &x_v^2+y_v^2
\end{array}
\right],
\end{eqnarray*}
we have
\begin{eqnarray*}
&\ds\frac1{x_u^2+y_u^2}=|\na u|^2 - \frac{\lang\na u, \na v\rang^2}{|\na v|^2},\
\frac1{x_v^2+y_v^2}=|\na v|^2 - \frac{\lang\na u, \na v\rang^2}{|\na u|^2},\\
&\ds x_u x_v+y_u y_v=-\lang\na u, \na v\rang (x_u y_v-x_v y_u)^2.
\end{eqnarray*}
\par
The conclusion ensues.
\end{proofB}

\begin{proofC}
The latter equation from \eqref{firstform} yields
$$
x_u:y_v=y_u:(-x_v),
$$
hence the following systems result
$$
\left[\begin{array}{cc}
x_u\\
y_u
\end{array}\right]=f\,
\left[\begin{array}{cc}
0 &1\\
-1 &0
\end{array}\right]
\left[\begin{array}{cc}
x_v\\
y_v
\end{array}\right], \ \
\left[\begin{array}{cc}
x_v\\
y_v
\end{array}\right]=\frac1{f}\,
\left[\begin{array}{cc}
0 &-1\\
1 &0
\end{array}\right]
\left[\begin{array}{cc}
x_u\\
y_u
\end{array}\right].
$$
\par
Factor $f$ is easily identified. Both the above systems give
$$
\sgn f=\sgn(x_u y_v-x_v y_u);
$$
the former and system \eqref{firstform} imply
$$
\frac1{f^2}=1+n^2(x,y)(x_v^2+y_v^2);
$$
the latter and system \eqref{firstform} imply
$$
f^2=1-n^2(x,y)(x_u^2+y_u^2).
$$
\par
Another arrangement reads
$$
\left[\begin{array}{cc}
x_u\\
x_v
\end{array}\right]=
\left[\begin{array}{cc}
0 &f\\
-1/f &0
\end{array}\right]
\left[\begin{array}{cc}
y_u\\
y_v
\end{array}\right], \ \
\left[\begin{array}{cc}
y_u\\
y_v
\end{array}\right]=
\left[\begin{array}{cc}
0 &-f\\
1/f &0
\end{array}\right]
\left[\begin{array}{cc}
x_u\\
x_v
\end{array}\right]
$$
--- two B\"acklund transformations, inverse of one another. The former and system \eqref{firstform} imply
$$
f^2=\frac{1-n^2(x,y)y_u^2}{1+n^2(x,y)y_v^2},
$$
the latter and system \eqref{firstform} imply
$$
f^2=\frac{1-n^2(x,y)x_u^2}{1+n^2(x,y)x_v^2}.
$$
\par
The integrability conditions, which pertain to the B\"acklund transformations in hand, read
$$
\left[\frac{\pa}{\pa u}\ \frac{\pa}{\pa v}\right]
\left[\begin{array}{cc}
1/f &0\\
0 &f
\end{array}\right]
\left[\begin{array}{cc}
x_u &y_u\\
x_v &y_v
\end{array}\right]=0
$$
and result in equation \eqref{matrixsys3} after algebraic manipulations.
\par
Equations \eqref{deff} and \eqref{deff2} are consistent with 
one another and with the early equations \eqref{back} and \eqref{invback}, as Proposition (ii) and its proof show.
\par
The proof is complete.
\end{proofC}

\subsection{}
In view of the discussion above, problem (i)-(iv) stated in 
Subsection 1.3 amounts to looking for solutions $[x\, y]$ 
to the following partial differential system
\begin{equation}
\label{pdesys}
\begin{array}{c}
\ds\frac{\pa}{\pa v}
\left[\begin{array}{cc}
x\\
y
\end{array}\right]=\frac1{f}\,
\left[\begin{array}{cc}
0 &-1\\
1 & 0
\end{array}\right]\,
\frac{\pa}{\pa u}
\left[\begin{array}{cc}
x\\
y
\end{array}\right], \\
\\
\ds\sgn f=\sgn\ka, \ \ f^2=1-n^2(x,y)(x_u^2+y_u^2),
\end{array}
\end{equation}
that are defined either in the half-strip
$$
a<u<b, \ 0<v<\infty
$$
or in an appropriate bounded subset if it, and satisfy the following initial conditions
\begin{equation}
\label{sysinitial}
x(u,0)=\al(u), \quad y(u,0)=\be(u) \ \mbox{ for } \ a\le u\le b.
\end{equation}
\par
We will concentrate on such a problem. A behavior of solutions 
$[x\, y]$ to \eqref{pdesys} and \eqref{sysinitial} as $v$ is close to $0$ 
is fixed up in the next section. An algorithm for computing 
the same solutions is offered in Section 4. Section 5 is devoted to an example.

\setcounter{equation}{0}
\setcounter{theorem}{0}

\section{Behavior near the caustic}

The state of affairs causes any solution of \eqref{pdesys} and \eqref{sysinitial} 
to suffer from {\it singularities} near the initial line --- indeed,
\begin{equation*}
\label{infty}
x_v^2(u,v)+y_v^2(u,v)\to \infty
\end{equation*}
as $a\le u\le b$ and $v$ approaches $0.$ The proposition below offers more 
details on the subject, as well as a proof of expansions \eqref{shadow}.

\begin{prop} 
Let $x$ and $y$ obey system \eqref{pdesys} and initial conditions \eqref{sysinitial}. 
Assume $x(u,v)$ and $y(u,v)$ depend smoothly upon $u$ for every nonnegative, sufficiently small $v.$ Then
the following asymptotic expansion holds
\begin{equation*}
\label{expansion}
\left[\begin{array}{cc}
x(u,v)\\
y(u,v)\end{array}\right]=
\left[\begin{array}{cc}
\al(u)\\
\be(u)\end{array}\right]+
\frac{(3v)^{2/3} \sgn\,\ka(u)}{2|\ka(u)|^{1/3}\sqrt{\al'(u)^2+\be'(u)^2}}
\left[\begin{array}{cc}
-\be'(u)\\
\al'(u)\end{array}\right]+o(v^{2/3})
\end{equation*}
as $a\le u\le b$ and $v$ is positive and approaches $0.$
\end{prop}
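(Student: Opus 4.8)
The plan is to derive the leading-order behaviour directly from system \eqref{pdesys} by a matched-expansion argument anchored at $v=0$. First I would set up curvilinear notation: write $\tau(u)=\sqrt{\al'(u)^2+\be'(u)^2}$ for the Euclidean speed along IC and let $T(u)=\tau(u)^{-1}[\al'(u),\be'(u)]^{T}$, $N(u)=\tau(u)^{-1}[-\be'(u),\al'(u)]^{T}$ be the unit tangent and normal. Since $t=u$ is a travel time, the eikonal relation forces the illuminated-side gradient of the geometric optical eikonal to have length $n$, and one checks (using Appendix A, or directly from \eqref{sysinitial} together with the first equation of \eqref{pdesys}) that $n(\al,\be)\,\tau=1$ on IC, i.e. the parametrisation is unit-speed in the travel-time metric. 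The geodesic-curvature hypothesis then says $\ka(u)$, defined by $\ka\,\tau^{2}=$ (Euclidean curvature) $-\langle N,\na\log n\rangle$, never vanishes; this is the coefficient that will appear in the expansion.

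Next I would extract the degenerate structure near $v=0$. From the second line of \eqref{pdesys}, $f^{2}=1-n^{2}(x_u^{2}+y_u^{2})$, and on $v=0$ we have $x_u^{2}+y_u^{2}=\tau^{2}=n^{-2}$, so $f\to 0$ as $v\to 0^{+}$: the system is parabolically degenerate exactly on IC, which is the analytic incarnation of the caustic singularity. The first line of \eqref{pdesys} then shows $[x_v,y_v]^{T}=f^{-1}[-y_u,x_u]^{T}$ blows up like $f^{-1}$. The natural move is to posit an ansatz
\begin{equation*}
\left[\begin{array}{c}x(u,v)\\ y(u,v)\end{array}\right]
=\left[\begin{array}{c}\al(u)\\ \be(u)\end{array}\right]+\phi(u,v)\,N(u)+(\text{lower order}),
\end{equation*}
with $\phi(u,v)\to 0$ and $\phi$ of some fractional power $v^{\theta}$ to be determined, the tangential correction being genuinely smaller. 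Substituting this into $f^{2}=1-n^{2}(x_u^{2}+y_u^{2})$ and expanding $n^{2}(x,y)$ and the $u$-derivatives to first order in $\phi$, the leading term of $1-n^{2}(x_u^{2}+y_u^{2})$ becomes linear in $\phi$ with coefficient proportional to $\ka$; concretely one finds $f^{2}\sim 2\,\ka(u)\,\phi$ up to positive factors absorbing $\tau$, and the sign condition $\sgn f=\sgn\ka$ fixes $\sgn\phi=+1$ (so $\phi=|\phi|$, which is why $v$ ends up nonnegative). Then the $v$-equation reads, to leading order, $\phi_v\,N\approx f^{-1}\tau\,N$, i.e. $\phi_v\approx \tau\,(2\ka\phi)^{-1/2}$ modulo constants, an ODE in $v$ whose solution is $\phi\propto (\text{const}\cdot v)^{2/3}$. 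Carrying the constants honestly — the $\tau=n^{-1}$ normalisation and the factor $2$ from $f^{2}\sim 2\ka\phi$ — pins the exponent $2/3$ and the precise prefactor $(3v)^{2/3}/(2|\ka|^{1/3}\tau)$, and translating $N=\tau^{-1}[-\be',\al']^{T}$ reproduces the stated display.

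The last step is to convert this formal computation into a genuine $o(v^{2/3})$ remainder estimate. I would introduce the rescaled unknown $\Psi(u,v)=v^{-2/3}\big([x,y]^{T}-[\al,\be]^{T}\big)$, show it stays bounded and has a limit as $v\to 0^{+}$ using the smooth-in-$u$ hypothesis (which lets me differentiate the expansion in $u$ and keep the error terms uniform on $a\le u\le b$), and identify that limit with $\tfrac{3^{2/3}\sgn\ka}{2|\ka|^{1/3}\tau}N$ by feeding $\Psi$ back into the two equations of \eqref{pdesys} and reading off a fixed-point/ODE-uniqueness statement for the limit profile. The main obstacle I expect is precisely this rigour of the remainder: the equation is degenerate at $v=0$, $f^{-1}$ is singular, and one must show the tangential component of the correction and the higher-order terms in the Taylor expansions of $n^{2}$ and of the $u$-derivatives are truly $o(v^{2/3})$ rather than merely formally subleading — this requires a careful a priori bound on $x_v,y_v$ (of exact order $v^{-1/3}$) and control of how errors propagate in $u$, which is where the smoothness-in-$u$ assumption does its essential work.
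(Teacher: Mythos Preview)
Your formal derivation is correct and arrives at the right exponent and prefactor, but you take a longer road than the paper does and you are right to flag the remainder control as the hard part of your route. The paper sidesteps that difficulty entirely by a short trick you did not use.

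Rather than posit an ansatz $[\al,\be]^{T}+\phi\,N$ and then solve a leading-order ODE for $\phi$, the paper works directly with the quantity $f^{3}$. Since $f\to 0$ at $v=0$, L'H\^opital gives
\[
\lim_{v\downarrow 0}\frac{v^{1/3}}{f(u,v)}
=\lim_{v\downarrow 0}\,\sgn f\cdot\Bigl(\sgn f\Big/\tfrac{\pa}{\pa v}f^{3}\Bigr)^{1/3},
\]
so the whole problem reduces to computing $\pa_{v}(f^{3})$ at $v=0$. Differentiating $f^{2}=1-n^{2}(x_u^{2}+y_u^{2})$ and feeding in the first line of \eqref{pdesys} to evaluate $\pa_{v}n^{2}$ and $\pa_{v}(x_u^{2}+y_u^{2})$, one line of algebra shows
\[
\pa_{v}(f^{3})=3\,n^{2}(x_u^{2}+y_u^{2})^{3/2}\Bigl\{\text{Euclidean curvature}-\langle N,\na\log n\rangle\Bigr\},
\]
which on $v=0$ is exactly $3\ka(u)$ times the appropriate speed factor. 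Hence $v^{1/3}/f$ has a finite nonzero limit, and multiplying through by $[-y_u,x_u]^{T}$ from \eqref{pdesys} gives
\[
\lim_{v\downarrow 0}(3v)^{1/3}\begin{bmatrix}x_v\\ y_v\end{bmatrix}
=\frac{\sgn\ka}{|\ka|^{1/3}\tau}\begin{bmatrix}-\be'\\ \al'\end{bmatrix}.
\]
Integration in $v$ yields the stated expansion, with the $o(v^{2/3})$ remainder coming for free from the existence of the limit of $v^{1/3}[x_v,y_v]^{T}$.

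What each approach buys: your ansatz-and-match argument is the physicist's reflex and works, but it leaves you owing an a priori bound of order exactly $v^{-1/3}$ on $[x_v,y_v]$ plus tangential-component control, which is real work in a degenerate system. The paper's $f^{3}$ trick packages all of that into a single L'H\^opital step: cubing $f$ is precisely what turns the singular coefficient into one whose $v$-derivative is regular and directly identifiable with the geodesic curvature, so no separate remainder analysis is needed.
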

\begin{proof}
A hypothesis made on $\al$ and $\be$ in Subsection 1.3, equations \eqref{pdesys} 
and initial conditions \eqref{sysinitial} tell us that
$$
f(u,v)\to 0
$$
as $a\le u\le b$ and $v$ is positive and approaches $0.$ Therefore,
$$
\lim_{v\downarrow 0}\frac{v^{1/3}}{f(u,v)}=
\lim_{v\downarrow 0} \sgn f(u,v)\,\left\{\sgn f(u,v)\left/\frac{\pa}{\pa u} f^3(u,v)\right.\right\}^{1/3}, 
$$
by L'Hospital's rule.
Equations \eqref{pdesys} give successively
\begin{eqnarray*}
&\ds\frac{\pa}{\pa v} n^2(x,y)=\frac{2\, n^2(x,y)}{f(u,v)} \left\lang \na\log n(x,y),\left[\begin{array}{cc}
-y_u\\
x_u
\end{array}\right]\right\rang,\\
&\ds\frac{\pa}{\pa v} (x_u^2+y_u^2)=-\frac{2}{f(u,v)}(x_u y_{uu}-x_{uu} y_u),
\end{eqnarray*}
and
\begin{multline*}
\frac{\pa}{\pa v} f^3(u,v)=3\,(x_u^2+y_u^2)^{3/2}n^2(x,y)
\left\{\frac{x_u y_{uu}-x_{uu} y_u}{(x_u^2+y_u^2)^{3/2}}-\right.\\
\left.\left\lang \na\log n(x,y),(x_u^2+y_u^2)^{-1/2}\left[\begin{array}{cc}
-y_u\\
x_u
\end{array}\right]\right\rang\right\}.
\end{multline*}
We infer 
$$
\lim_{v\downarrow 0}\, (3v)^{1/3}\left[\begin{array}{cc}
x_v\\
y_v
\end{array}\right]=\frac{\sgn\,\ka(u)}{|\ka(u)|^{1/3}\sqrt{\al'(u)^2+\be'(u)^2}}\left[\begin{array}{cc}
-\be'(u)\\
\al'(u)
\end{array}\right]
$$
because of the very definition of $\ka.$
\par
The conclusion follows.
\end{proof}

\setcounter{equation}{0}
\setcounter{theorem}{0}
\section{Discrete setting}

\subsection{}
Rendering problem \eqref{pdesys} and \eqref{sysinitial} into an effective 
discrete form entails coping with {\it singularities} of solutions, {\it overflows,} 
and {\it ill-posedness} in the sense of Hadamard.
\par
Singularities result from features of both the system and 
the initial conditions in hand, as already remarked in the previous section. 
Overflows take place whenever the constraint
$$
n^2(x,y)(x_u^2+y_u^2)<1
$$
chances to be violated. Note that the system
$$
\frac{\pa}{\pa v}
\left[
\begin{array}{c}
x\\ y
\end{array}
\right]=
\left[
\begin{array}{cc}
0 &1\\
-1 &0
\end{array}
\right]
\frac{\pa}{\pa u}
\left[
\begin{array}{c}
x\\ y
\end{array}
\right]
$$
--- Cauchy-Riemann, a possible linearized version of \eqref{pdesys} --- possesses obvious solutions
$$
\left[
\begin{array}{c}
x\\ y
\end{array}
\right]=\exp(-\sqrt{t}+t v)
\left[
\begin{array}{c}
\cos(t u)\\ \sin(t u)
\end{array}
\right]
$$
which are highly instable, i.e.
$$
\sup\left\{
\left[\frac{\pa^j x}{\pa u^j}(u,0)\right]^2+\left[\frac{\pa^j y}{\pa u^j}(u,0)\right]^2:
-\infty<u<\infty
\right\}\to 0
$$
as $t\uparrow\infty$ and $j=1, 2,\dots,$ and
$$
\inf\left\{
x^2(u,v)+y^2(u,v): -\infty<u<\infty
\right\}\to \infty
$$
as $t\uparrow\infty$ and $v$ is positive.
\par
Ill-posedness is a typical drawback of initial value problems for partial differential equations 
and systems of elliptic type. It was observed by Hadamard \cite{Had1}-\cite{Had2}, 
and deeply investigated by John \cite{Jo1}-\cite{Jo3}, Lavrentiev jr. \cite{La1}-\cite{La3}, \cite{LV}, 
Miller \cite{Mlr1}-\cite{Mlr3}, Payne \cite{Pay1}-\cite{Pay4} and \cite{PaSa1}-\cite{PaSa2}, 
Pucci \cite{Pc1}-\cite{Pc4}, and Tikhonov \cite{Tkh1}-\cite{Tkh5}. 
Classical surveys on the subject have been authored by Lavrentiev jr., \cite{La4}, 
\cite{LRS}, Payne \cite{Pay5}, and Tikhonov \cite{TA}. 
Related information is in \cite{Brt}-\cite{Brt1}, \cite{BDV}, \cite{BG}, 
\cite{Isa}, \cite{IVT}, \cite{Mor}, \cite{Nsh}, \cite{Rch} and \cite{Tl}. 
A sample of more recent contributions includes \cite{Ali}, \cite{AI}, \cite{AK},
\cite{Bll}, \cite{Brt2}, \cite{BKP}, \cite{BE}, \cite{BR}, \cite{BV2}, \cite{Bre}, 
\cite{Cnn}, \cite{Crr}, \cite{CD}, \cite{CE}, \cite{Co}, \cite{Cdv}, \cite{De},
\cite{El1}-\cite{El2}, \cite{Ew1}-\cite{Ew2}, \cite{HH}, \cite{HR}, \cite{Hf}, 
\cite{KV}, \cite{Kno}, \cite{LV}, \cite{Lns}, \cite{Msl1}-\cite{Msl2}, \cite{Mnk}, 
\cite{Ntr1}-\cite{Ntr2}, \cite{Pay6}-\cite{Pay8}, \cite{Pmh}, \cite{RE}, 
\cite{RHH}, \cite{RS}, \cite{Ro}, \cite{Stn}, \cite{TGSY}, \cite{TLY}, \cite{VFC}, \cite{W}.
\par
As experience suggests, one might attempt to contend with ill-posedness 
via {\it a priori} bounds on solutions and similar devices. 
We opt not to touch on this issue in the present paper, 
and focus our attention on constructive aspects instead.
\par
We rely upon: (i) {\it asymptotic expansions,} describing how relevant solutions 
behave near the initial line; (ii) a technique of {\it approximate differentiation,} 
especially designed for working in presence of errors; 
(iii) an appropriate injection of {\it artificial viscosity,} 
which softens a coefficient and protects against overflows; 
(iv) an imitation of the {\it quasi-reversibility method.}

\subsection{}
Besides data exposed to view, our method takes six parameters in input: 
$M, N, \la, \mu, \nu, \xi$. The first two are large integers 
that specify the number of samples in hand --- e.g. $M = N = 100.$ 
The remaining four parameters set the tone of smoothing processes:
$\la$ and $\mu$ relate to approximate differentiation; $\nu$ stands for viscosity; 
$\xi$ relates to quasi-reversibility.

\subsection{}
The following equations and inequality
\begin{eqnarray*}
&\ds \ust=(b-a)/(M-1), \ \ \vst>0,\\
&\ds u_j=a+(j-1)\,\ust \ \ (j=1,\dots, M), \ \ 
v_k=(k-1)\,\vst \ \ (k=1,\dots, N), 
\end{eqnarray*}
will be in force throughout this section. We choose a mesh to consist of the following points
$$
(u_j, v_k) \ \ (j=1,\dots, M; k=1,\dots, N),
$$
and store sample values at mesh points in the following matrices
$$
[x(j,k)]_{j=1,\dots, M; k=1,\dots, N}, \ \ [y(j,k)]_{j=1,\dots, M; k=1,\dots, N}.
$$
\par
The columns of these matrices, i.e.
$$
x(\cdot,1), x(\cdot,2),\dots, x(\cdot,N), \ \  y(\cdot,1), y(\cdot,2),\dots, y(\cdot,N),
$$
are recursively generated in the following way.
\par\noindent
{\bf First step.} 
$$
x(j,1)=\al(u_j), \ y(j,1)=\be(u_j) \ \ (j=1,\dots, M),
$$
according to initial conditions \eqref{sysinitial}.
\par\noindent
{\bf Second step.} 
\begin{eqnarray*}
&\ds x(j,2)=x(j,1)+\frac{\sgn\ka(u_j)}{2 |\ka(u_j)|^{1/3}}
\frac{-\be'(u_j)}{\sqrt{\al'(u_j)^2+\be'(u_j)^2}}\,(3 v_2)^{2/3},\\
&\ds y(j,2)=y(j,1)+\frac{\sgn\ka(u_j)}{2 |\ka(u_j)|^{1/3}}
\frac{\al'(u_j)}{\sqrt{\al'(u_j)^2+\be'(u_j)^2}}\,(3 v_2)^{2/3} \ \ (j=1,\dots, M),
\end{eqnarray*}
according to expansion \eqref{expansion}.
\par\noindent
{\bf Further steps.} For  $k=3,\dots, N$ do actions (i)-(iii) below.
\par\noindent
(i) Differentiate.
\begin{eqnarray*}
&\ds X=x(\cdot, k-1), \ Y=y(\cdot, k-1),\\
&\ds A=D X, \ B=D Y.
\end{eqnarray*}
Here 
\begin{eqnarray*}
&\ds \la=\mbox{a dimensionless positive parameter},\\
&\ds \frac{\mu}{b-a}\simeq 0.0415+0.5416\times M^{-1/2}-0.6426\times M^{-1}+1.3706\times M^{-3/2},\\
&\ds K(u)=\frac1{\pi}\int_0^\infty \frac{\cos(ut)}{1+t^8}\, dt
\end{eqnarray*}
for every $u,$ and
$$
D=\frac1{\mu}\left[K'\left(\frac{u_i-u_j}{\mu}\right)\right]_{i,j=1,\dots, M}
\left\{\la\, \id+\left[K\left(\frac{u_i-u_j}{\mu}\right)\right]_{i,j=1,\dots, M}\right\}^{-1}
$$
--- a matrix that mimics differentiation with respect to variable $u,$ 
and is analyzed in Appendix C. 
\par\noindent
(ii) Enter viscosity.
$$
f(j)=P(\nu,n(X(j),Y(j))\sqrt{A(j)^2+B(j)^2} \, \sgn\ka(u_j)  \ \ (j=1,\dots,M),
$$
$$
U=-\left[
\begin{array}{cccccc}
f(1) &0 &\dots &0\\
0 &f(2) &\dots &0\\
\vdots &\vdots &\ddots &\vdots\\
0 &0 &\dots &f(M)
\end{array}
\right]^{-1}\!\! B,
\ \
V=\left[
\begin{array}{cccccc}
f(1) &0 &\dots &0\\
0 &f(2) &\dots &0\\
\vdots &\vdots &\ddots &\vdots\\
0 &0 &\dots &f(M)
\end{array}
\right]^{-1}\!\! A.
$$
\vskip.1cm\noindent
Here
$$
0<\nu=\mbox{artificial viscosity}\le\pi/2,
$$
and $P$ is given by
$$
P(\nu,\rho)=\left\{
\begin{array}{ll}
\ds\sqrt{1-\rho^2} \ &\mbox{ if } \ 0\le\rho\le (1+\sin^2 \nu)^{-1/2},\\
\ds\frac{\sin \nu}{\rho+\sqrt{\rho^2-\cos^2 \nu}}\ &\mbox{ if } \ \rho>(1+\sin^2 \nu)^{-1/2}
\end{array}\right.
$$
--- observe that
\begin{eqnarray*}
& 0\le \rho\mapsto P(\nu,\rho) \ \mbox{is strictly positive and continuously differentiable},\\
&\ds P(\nu,\rho)\ \mbox{approaches } \sqrt{1-\rho^2} \
\mbox{uniformly as $0\le\rho\le 1$ and $\nu$ approaches $0.$}
\end{eqnarray*}
\par\noindent
(iii) Enter quasi-reversibility.
$$
x(\cdot,k)=\fhi(v_{k}), \ \ y(\cdot,k)=\psi(v_{k}).
$$
Here $\fhi$ and $\psi$ are the vector-valued mappings that are generated thus:
$$
\xi=\mbox{a dimensionless positive parameter};
$$
$$
R=(\ust)^{-2}\left[
\begin{array}{cccccc}
2 &-5 &4 &-1 &\cdots &0\\
1 &-2 &1 &0  &\cdots &0\\
\ddots &\ddots &\ddots &\ddots &\ddots &\ddots\\
0 &\cdots &0 &1 &-2 &1\\
0 &\cdots &-1 &4 &-5 &2
\end{array}
\right],
$$
a caricature of a second-order derivative; 
\begin{eqnarray*}
& \fhi(v_{k-1})=X, \ \ \fhi'(v_{k-1})=U,\\
&\ds 3 \xi\, \nr R \fhi(v_k)\nr^2+(\vst)\,
\ints_{v_{k-1}}^{v_k}\nr\fhi''(v)\nr^2 dv=\mbox{minimum},
\end{eqnarray*}
\begin{eqnarray*}
& \psi(v_{k-1})=Y, \ \ \psi'(v_{k-1})=V,\\
&\ds 3 \xi\, \nr R \psi(v_k)\nr^2+(\vst)\,
\ints_{v_{k-1}}^{v_k}\nr\psi''(v)\nr^2 dv=\mbox{minimum}.
\end{eqnarray*}
As is easy to see,
\begin{eqnarray*}
&\fhi''''(v)=0 \ \mbox{ if } \ \ v_{k-1}<v<v_k,\\
&\fhi''(v_k)=0, \ \fhi'''(v_k)=3\xi (\vst)^{-2} (R^T R)\, \fhi(v_k);\\
&\psi''''(v)=0 \ \mbox{ if } \ \ v_{k-1}<v<v_k,\\
&\psi''(v_k)=0, \ \psi'''(v_k)=3\xi (\vst)^{-2} (R^T R)\, \psi(v_k).   
\end{eqnarray*}
Therefore,
\begin{eqnarray*}
&x(\cdot,k)=\left(\id+\xi\,\vst\,(R^T R)\right)^{-1}(X+\vst\, U),\\
&\ds y(\cdot,k)=\left(\id+\xi\,\vst\,(R^T R)\right)^{-1}(Y+\vst\, V).
\end{eqnarray*}
---  in other words $x(\cdot,k)$ and $y(\cdot,k)$ are mollified versions of $X+\vst\, U$ and $Y+\vst\, V,$ respectively.
\par
{\bf Last step.} End.

\subsection{} 
As a matter of fact, the above process simulates the following partial differential system 
\begin{eqnarray*}
&\ds \frac{\pa}{\pa v}
\left[
\begin{array}{c}
x\\ y
\end{array}
\right]=
\frac1{f}\,\left[
\begin{array}{cc}
0 &-1\\ 
1 &0
\end{array}
\right]
\frac{\pa}{\pa u}
\left[
\begin{array}{c}
x\\ y
\end{array}
\right]-\xi^4
\frac{\pa^4}{\pa u^4}
\left[
\begin{array}{c}
x\\ y
\end{array}
\right],\\
\\
&\ds f=P\left(\nu, n(x,y) \sqrt{x_u^2+y_u^2}\right)\,\sgn \ka(u),
\end{eqnarray*}
where the modified, and extra, terms protect against overflows and instability. The methods based on either artificial viscosity or quasi-reversibility share basic features: they all suggest perturbing the underlying partial differential equation or system in a way or another, in order to palliate obstructions. The quasi-reversibility method was introduced in \cite{LL}, and improved in \cite{Mi}, \cite{GZ}; 
other references are \cite{AT}, \cite{ATY}, 
\cite{Bou1}-\cite{Bou2}, \cite{DR}, \cite{Ew1}, \cite{Go}, \cite{Hu}, 
\cite{HZ}, \cite{KS}, \cite{La}, \cite{Pay5}, \cite{Sho1}-\cite{Sho2}, \cite{TT}.

\setcounter{equation}{0}
\setcounter{theorem}{0}
\section{Example}

For simplicity, suppose refractive index $n$ is $1.$ 
Consider the curve --- known as {\it Tschirnhausen's cubic} or 
{\it trisectrix of Catalan} \cite{Law}, \cite{Seg} --- whose parametric equations read
$$
x=\frac12 (1-3 t^2), \ \  y=\frac{t}{2} (3-t^2), \ \ (t=\mbox{parameter}),
$$
and imply
$$
\mbox{arc length}=\frac{t}{2} (t^2+3), \ \
t=2\sinh\left(\frac13\arcsinh(\mbox{arc length})\right).
$$

\begin{figure}[tbh]
\label{fig:munch2}
\centerline{
{\epsfxsize=3.5in
\epsfbox{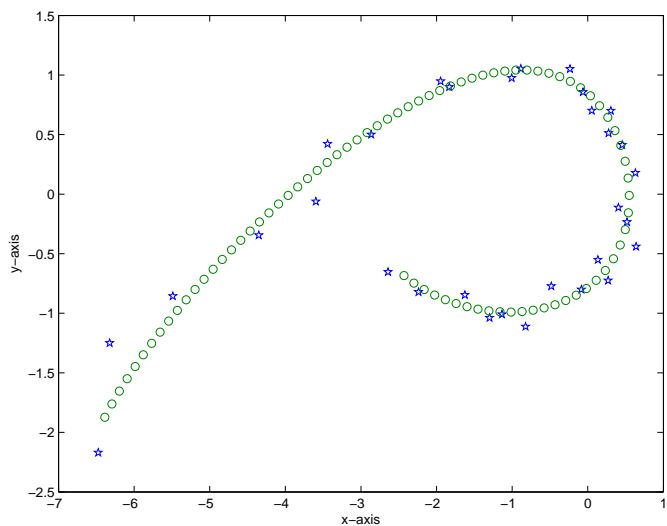}
} 
}
\caption{An arc of Tschirnhausen's cubic: 
a gross sampling (stars) and a denoised version (cicles).}
\end{figure}

\begin{figure}[tbh]
\label{fig:munch3}
\centerline{
{\epsfxsize=3.5in
\epsfbox{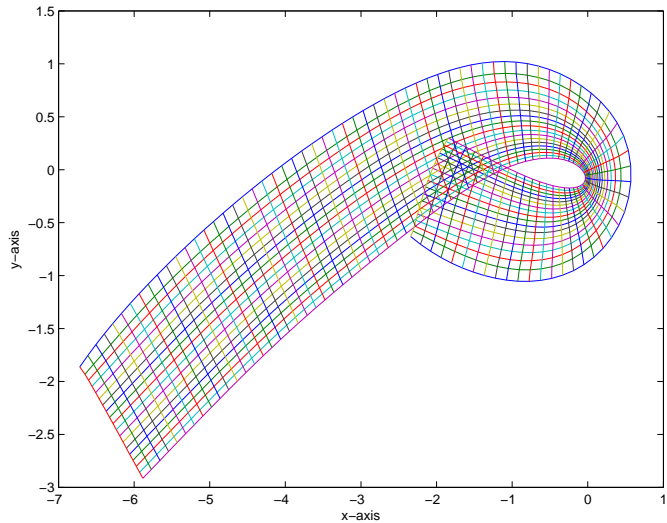}
} 
}
\caption{Level lines where either $u=$ constant or $v=$ constant.}
\end{figure}
\par
A geometric optical eikonal $w$ making Tschirnhausen's cubic 
a caustic is represented by the following equations
\begin{eqnarray*}
&\ds x=\frac12 (1-3 t^2)-\frac{2 s t}{1+t^2}, \ \ y=\frac{t}{2} (3-t^2)+\frac{s (1-t^2)}{1+t^2}, \\
&\ds w=s+\frac{t}{2} (3+t^2) \ \ (s, t=\mbox{parameters})
\end{eqnarray*}
in the light region. As arguments from Appendix B show, 
the same eikonal can be continued in the shadow region via the following equations
\begin{eqnarray*}
&\ds x=\frac{1-2 (s^2+t^2)+s^4-2 s^2 t^2-3 t^4}{2(1+s^2+t^2)}, \ \ 
y=\frac{t\,[3-2 (s^2-t^2)-(s^2+t^2)^2]}{2(1+s^2+t^2)}, \\
&\ds w=is+t-\frac{2(is+t)}{1+(is+t)^2}\, x+\frac{1-(is+t)^2}{1+(is+t)^2}\, y \ \ (s, t=\mbox{real parameters}).
\end{eqnarray*}

\begin{figure}[tbh]
\label{fig:munch1}
\centerline{
{\epsfxsize=3.5in
\epsfbox{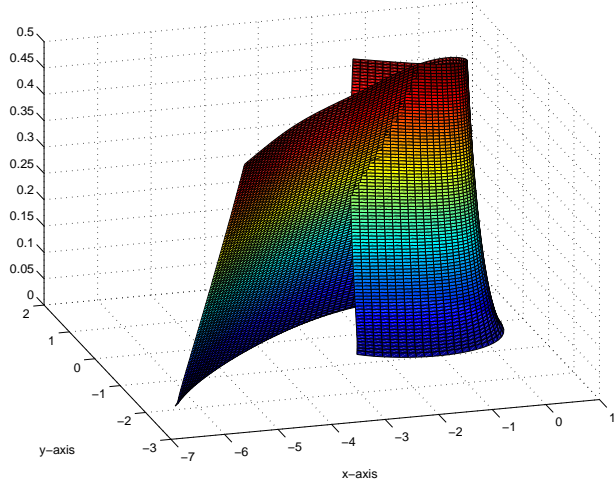}
} 
}
\caption{Plot of imaginary part $v$ versus $x$ and $y.$}
\end{figure}

\par
The method from Sections 1 to 4 goes in the following way.
\par\noindent
(i) Consider the arc of the Tschirnhausen's cubic where
$$
-1.5\le t\le 2.2,
$$
and let
$$
(\al_j, \be_j) \ \ (j=1,\dots, 31)
$$
be a gross sampling of such an arc --- in other words,
\begin{eqnarray*}
&\ds t_j=-1.5+3.7\,\frac{j-1}{30} \ \ (j=1,\dots, 31),\\
&\ds \al_j=\frac12 (1-3 t_j^2)+(5\% \mbox{ random noise}), \ \
\be_j=\frac{t_j}{2} (3-t_j^2) +(5\% \mbox{ random noise}).
\end{eqnarray*}
\par\noindent
(ii) Plug gross data into the following denoising process
\begin{eqnarray*}
&\ds \la=0.005, \ \ \mu=0.1260,\\
&\ds \sums_{j=1}^{31} \left[\al\left(\frac{j-1}{30}\right)-\al_j\right]^2+
\la \ints_{-\infty}^{\infty}[\mu^7 (\al'''')^2+\mu^{-1} \al^2]\, dt=\mbox{minimum},\\
&\ds \sums_{j=1}^{31} \left[\be\left(\frac{j-1}{30}\right)-\be_j\right]^2+
\la \ints_{-\infty}^{\infty}[\mu^7 (\be'''')^2+\mu^{-1} \be^2]\, dt=\mbox{minimum},
\end{eqnarray*}
and let the path that is represented by the following equations
$$
x=\al(t), \ \ y=\be(t), \ \ 0\le t\le 1,
$$
surrogate the original Tschirnhausen's cubic.
\par\noindent
(iii) Adjust parametric equations as follows:
\begin{eqnarray*}
&\ds\frac{dt}{du}=[\al'(t)^2+\be'(t)^2]^{-1/2}, \ t(0)=0, \\
&x=\al(t(u)), \ \ y=\be(t(u)), \ \ 0\le u\le \mbox{Length}.
\end{eqnarray*}
\par\noindent
(iv) Select requisite parameters thus
$$
M=101, \ \ N=91, \ \ \vst=0.005, \ \ \nu=0.5, \ \ \xi=0.9;
$$
and then set the algorithm from Section 4 to work.
\par
Results are shown in figures 2, 3 and 4, and comfortably agree with those drawn from closed formulas.
\vskip.1cm
\centerline{\bf Acknowledgement}
\vskip.1cm
The Authors wish to thank L. Sgheri and G.A. Viano for their valuable help and advice.

\setcounter{equation}{0}
\setcounter{theorem}{0}

\appendix 
\def\theequation{\Alph{section}.\arabic{equation}}
\def\thetheorem{\Alph{section}.\arabic{theorem}}

\section{}

Basic mathematical lineaments of two-dimensional geometrical optics are outlined in the next paragraphs. 
Selected references on geometrical optics, and on some of its generalizations and applications 
are \cite{BB}, \cite{Dui}, \cite{GS}, \cite{Jo}, \cite{Ke}, \cite{KL}, \cite{Kli-1}-\cite{Kli-2}, \cite{KK},
\cite{KO-1}, \cite{Lun}, \cite{MF}, \cite{MO}, \cite{Ra}.

\subsection{Terminology.} Let $n$ be a {\it refractive index} --- i.e. a tractable function of two real variables $x$ and $y,$ 
which takes positive values only and is bounded away from zero locally. Any real-valued, suitably smooth solution 
$w$ to \eqref{eik} is a {\it geometric optical eikonal (GOE).} The domain of $w,$ plus parts of the relevant boundary, 
is a {\it light region;} the complement of it is a {\it shadow region.} The trajectories of $\na w,$ 
namely the orbits of the following dynamical system
$$
\left| 
\begin{array}{cc}
dx &dy\\
w_x(x,y) &w_y(x,y)
\end{array}
\right|=0
$$
are called {\it lines of steepest descent} --- irrespective of whether they are genuine lines or not. 
A line of steepest descent is a {\it ray} if $w$ is twice continuously differentiable in some neighborhood of it; 
any line of steepest descent, which is not a ray, is a {\it caustic.} 
(Rays are smooth curves, which have one degree of freedom and travel all over areas without intersecting one another. 
Caustics are exceptions in a sense: loosely speaking, 
they can be thought of as envelopes of rays.) The Riemannian arc length, whose element takes the following form
$$
n(x,y)\,\sqrt{dx^2+dy^2},
$$
is known as {\it travel time} --- travel time is an alias of the customary arc length in the case where $n\equiv 1.$ 

\subsection{GOEs and travel time.}

(i) The restriction of any GOE $w$ to either an appertaining 
ray or caustic automatically coincides with a properly rescaled travel time $t.$
\par\noindent
(ii) Let {\it nodal line} be an alias of locus of zeros. 
The value of any GOE $w$ at any point $(x,y)$ equals either the 
travel time between $(x,y)$ and a nodal line of $w$ or the negative of such a travel time 
--- provided $(x,y)$ is not a long way off.
\begin{proofA} 
By definition, both rays and caustics of $w$ obey
$$
dx: w_x(x,y)= dy: w_y(x,y);
$$
the following equation
$$
t=\mbox{a rescaled travel time}
$$
is an alias of
$$
dt=\pm n(x,y) \sqrt{(dx)^2+(dy)^2}.
$$
Consequently,
$$
(dw/dt)^2=n^{-2}(x,y) (w_x^2+w_y^2)
$$
along any ray or caustic in question. Property (i) follows.
\end{proofA}
\par
Property (ii) is a consequence of (i) and Fermat's principle below. 

\subsection{Fermat's principle.} The travel time geodesics, i.e. those curves which render
$$
\int n(x,y)\,\sqrt{dx^2+dy^2} 
$$
either a minimum or stationary, are characterized by the following second-order ordinary differential equation
$$
\mbox{curvature} =\langle \mbox{unit normal},\na\log n(x,y)\rangle
$$
--- they have geodesic curvature $0,$ and are perfect straight lines in the case where $n\equiv 1.$
The rays of any GOE are geodesics with respect to travel time. The travel time geodesics that are trajectories of some proper vector field --- i.e. have one degree of freedom and are free from mutual intersections --- are the rays of some GOE.
\vskip.1cm
The foregoing statements rest upon first principles of calculus of variations, 
differential geometry and ordinary differential equations. 
Recall that the following formulas apply to any smooth parametric curve:
$$
\mbox{velocity}=\sqrt{\left(dx/dt\right)^2+\left(dy/dt\right)^2},
$$
$$
\mbox{unit tangent $=$ (velocity)}^{-1}\frac{d}{dt}
\left[\begin{array}{cc}
x\\
y
\end{array}\right], \ \ \mbox{unit normal}=\left[\begin{array}{cc}
0 &-1\\
1 &0
\end{array}\right]
(\mbox{unit tangent})
$$
$$
(\mbox{velocity})^{-1}\frac{d}{dt}(\mbox{unit tangent})=(\mbox{curvature})\times (\mbox{unit normal}),
$$
$$
(\mbox{velocity})^{-1}\frac{d}{dt}(\mbox{unit normal})=-(\mbox{curvature})\times (\mbox{unit tangent}),
$$
$$
\mbox{curvature}=(\mbox{velocity})^{-3}
\left|\begin{array}{cc}
dx/dt &d^2x/dt^2\\
dy/dt &d^2y/dt
\end{array}\right|\, ;
$$
$$
\mbox{curvature}=(\mbox{velocity})^{-3} h^{-3}\,\left| 
\begin{array}{ccc}
x(t-h) &y(t-h) &1\\
x(t) &y(t) &1\\
x(t+h) &y(t+h) &1
\end{array}
\right|+O(h^2),
$$
\begin{multline*}
\mbox{curvature}=(\mbox{velocity})^{-3} h^{-3}\,\left\{\left| 
\begin{array}{ccc}
x(t) &y(t) &1\\
x(t+h) &y(t+h) &1\\
x(t+2h) &y(t+2h) &1
\end{array}
\right| \right.\\
\left. -\frac12\,\left| 
\begin{array}{ccc}
x(t+h) &y(t+h) &1\\
x(t+2h) &y(t+2h) &1\\
x(t+3h) &y(t+3h) &1
\end{array}
\right|\right\} +O(h^2).
\end{multline*}
Recall that the following formula
\begin{eqnarray*}
&\mbox{curvature of the lines of steepest descent $=$}\\
&\mbox{Jacobian determinant of $|\na w|^{-1}$ \& $w$}
\end{eqnarray*}
applies whenever $w$ is smooth and has no critical point. Recall also that
\begin{eqnarray*}
& n(x,y)\times (\mbox{Geodesic curvature})=\\
& \mbox{Euclidean curvature}-\langle \mbox{unit normal},\na\log n(x,y)\rangle
\end{eqnarray*}
if travel time is an alternative Riemannian metric in force.

\subsection{Initial value problems and geometry of their solutions.}
The condition of taking given values along some given path is qualified {\it initial} according to usage. 
Seeking a GOE, which obeys some initial condition, is an {\it initial value problem.} 
Such a problem has either two different solutions or no solution at all, 
depending on whether the eikonal equation and the initial condition match 
or conflict. Generally speaking, a solution $w$ can be detected in the former case by successively 
detecting the objects listed below, based upon the arguments provided.
\begin{itemize}
\item
The values of $\na w$ along the initial curve.
\end{itemize}
Since the eikonal equation specifies the length of $\na w$ 
and the initial condition identifies a tangential component of $\na w$, 
the normal component of $\na w$ along the initial curve comes out in two different modes.
\begin{itemize}
\item
The rays of $w.$
\end{itemize}
An ODE reasoning demonstrates that the travel time geodesics, 
which live near the initial curve and leave it with the same direction as $\na w,$ 
are the trajectories of a smooth vector field. By Fermat's principle, 
these geodesics are the requested rays indeed.
\begin{itemize}
\item
The values of $w$ itself on each ray. 
\end{itemize}
Property (i) from Subsection 2 fits the situation well.

\subsection{Standard initial value problems.} 
The present item concerns existence, regularity and the number of GOEs that satisfy orthodox initial conditions.
\par
Assume henceforth all ingredients are smooth and let IC stand for initial curve in shorthand. Let
\begin{equation}
\label{curveapp}
x=\al(t), \quad y=\be(t), \quad a\le t\le b
\end{equation}
be a parametric representation of IC such that
\begin{equation}
\label{ttimeapp}
t=a\ travel\ time.
\end{equation}
Let the initial condition imply
\begin{equation*}
\label{icapp}
w(x,y)=\ga(t)
\end{equation*}
as $x,y$ and $t$ satisfy \eqref{curveapp}, and assume
\begin{equation}
\label{gammapp}
\left|\frac{d\ga}{dt}(t)\right|<1
\end{equation}
for $a\le t\le b.$
Then exactly two GOEs satisfy the initial condition displayed.
Moreover, these eikonals are smooth in a {\it full} neighborhood of IC --- 
the relevant light regions surround it completely.
\par
The case where refractive index $n$ is constant involves explicit formulas, 
as well as gives evidence to interactions among the eikonal equation, 
{\it Burgers-type equations} and {\it B\"acklund transformations.} Assume
$$
n\equiv 1,
$$
and let \eqref{curveapp} to \eqref{gammapp} be in force. Define $\fhi, \psi, \om, p, q$  by
\begin{eqnarray*}
&\cos\fhi=\al', \quad \sin\fhi=\be';\\
&\cos\psi=\ga', \quad \sin\psi=\pm\sqrt{1-(\ga')^2};\\
&\om=\fhi+\psi;\\
&p=\cos\om, \quad q=\sin\om.
\end{eqnarray*}
\par
Since the appurtenant Jacobian matrix equals
$$
\left[
\begin{array}{cc}
q(s) & p(s)\\
-p(s) & q(s)
\end{array}
\right]
\left[
\begin{array}{cc}
\sin\psi(s)-(w-\ga(s))\, d\om(s)/ds &\ 0\\
0 &\ 1
\end{array}
\right]
$$
the following pair
$$
\left[
\begin{array}{c}
x\\
y
\end{array}
\right]=
\left[
\begin{array}{cc}
\al(s)\\
\be(s)
\end{array}
\right]+(w-\ga(s))\,\left[
\begin{array}{cc}
p(s)\\
q(s)
\end{array}
\right]
$$
makes $s$ and $w$ implicit functions of $x$ and $y$ in a neighborhood of IC. 
The properties listed below ensue. Function $s$ obeys the following equations
\begin{eqnarray*}
&\ds -q(s)\,(x-\al(s))+p(s)\,(y-\be(s))=0,\\
&\ds p(s)\,\frac{\pa s}{\pa x}+q(s)\,\frac{\pa s}{\pa y}=0,\\
&\ds \left(\frac{\pa s}{\pa x}\right)^2+\left(\frac{\pa s}{\pa y}\right)^2=
\left[
\sin\psi(s)-(w-\ga(s))\,\frac{d\om(s)}{ds}\right]^{-2}.
\end{eqnarray*}
(The first assures that the level lines of $s$ are straight, the second is a PDE of {\it Burgers type.}) 
Function $w$ obeys the following {\it eikonal equation}
$$
\left(\frac{\pa w}{\pa x}\right)^2+\left(\frac{\pa w}{\pa y}\right)^2=1.
$$
Functions $s$ and $w$ are related by the following equations
\begin{eqnarray*}
&\ds \na w=\left[
\begin{array}{cc}
p(s)\\
q(s)
\end{array}
\right],\\
&\ds \left[
\begin{array}{cc}
w_{xx} & w_{xy}\\
w_{xy} & w_{yy}
\end{array}
\right]=\frac{d\om(s)}{ds}\,
\left[
\begin{array}{cc}
-w_y\\
w_x
\end{array}
\right]
[s_x \ s_y].
\end{eqnarray*}
(The former is a {\it B\"acklund transformation,} which pairs 
solutions to the Burgers and the eikonal equations mentioned above. It assures 
that $\na s$ and $\na w$ are orthogonal, and the level lines of $s$  are both lines of 
steepest descent and isoclines of  $w.$) The Euclidean metric obeys
$$
dx^2+dy^2=|\na s|^{-2} ds^2+dw^2.
$$
There holds
$$
s(x,y)=t, \quad w(x,y)=\ga(t)
$$
as $x, y$ and $t$ satisfy \eqref{curveapp}.
If $\om$ is free from critical points, the line where
$$
\left[
\begin{array}{c}
x\\
y
\end{array}
\right]=
\left[
\begin{array}{cc}
\al(s)\\
\be(s)
\end{array}
\right]+\frac{\sin\psi(s)}{d\om(s)/ds}\,\left[
\begin{array}{cc}
p(s)\\
q(s)
\end{array}
\right]
$$
is a caustic. (Such a line is an envelope of the level lines of $s.$ Function $s$ develops shocks along it, 
the restriction of $w$ to it equals arc length, and the second-order derivatives of $w$  blow up there.)

\subsection{Borderline initial value problems and caustics.} 
The present item is a recipe for producing caustics, which involves coupling the eikonal equation 
with borderline initial conditions. In the case where refractive index $n$ is identically $1,$ 
any smooth convex curve can be viewed as a caustic provided a GOE is detected, 
whose restriction to the curve in hand equals a relevant arc length.
\par
Let \eqref{curveapp} specify IC and let \eqref{ttimeapp} hold. Let the initial condition imply
\begin{equation*}
\label{icarcapp}
w(x,y)=t
\end{equation*}
as $x, y$ and $t$ satisfy \eqref{curveapp}.
Assume that the appropriate
{\it geodesic curvature} of IC is free from zeros ---
in other words, let \eqref{curveapp} and the following equation
\begin{equation*}
\label{geodapp}
\ka\, (\mbox{velocity})^2=\mbox{Euclidean curvature}-\langle \mbox{unit normal}, \na\log n(x,y) \rangle
\end{equation*}
result in 
\begin{equation*}
\label{kaapp}
\ka\mbox{ \it vanishes nowhere.}
\end{equation*}
\par
Then exactly two GOEs satisfy the present initial condition. 
Both these eikonals fail to exist on both sides of, and be smooth near IC. 
They turn IC into a {\it caustic,} and make the side of it, which
\begin{equation*}
\label{signapp}
(\sgn\,\ka)\times\mbox{(unit normal)}
\end{equation*}
points to, a {\it shadow} region. Either eikonal in hand obeys
\begin{equation*}
\label{wdewapp}
w(x,y)=s\pm\frac{2\sqrt{2}}{3}|r|^{\frac32}|\ka(s)|^{\frac12}+O(r^2), \ \
\De w(x,y)=\pm\frac{1}{\sqrt{2}}|r|^{-\frac12} |\ka(s)|^{\frac12}+O(1)
\end{equation*}
at every point $(x,y)$ that belongs to the light region and is close enough to IC --- in particular, 
the two-sheeted surface, made up of the two eikonals in hand, exhibits an {\it edge of regression} above IC.
\par
Here $r$ and $s$ are the curvilinear coordinates that the following pair
\begin{equation*}
\label{curvcoorapp}
\left[\begin{array}{c}
x\\
y
\end{array}\right]=
\left[\begin{array}{c}
\al(s)\\
\be(s)
\end{array}\right]+
r\bigl(\al'(s)^2+\be'(s)^2\bigr)^{-\frac12}
\left[\begin{array}{c}
-\be'(s)\\
\al'(s)
\end{array}
\right]
\end{equation*}
relates to rectilinear coordinates $x$ and $y$. Coordinate $r$ is a 
signed distance from IC, coordinate $s$ makes $(\al(s(x,y)), \be(s(x,y)))$ 
the orthogonal projection of $(x,y)$ on IC. The former is constant 
on the parallel lines to IC and obeys the following eikonal equation 
$$
\left(\frac{\pa r}{\pa x}\right)^2+\left(\frac{\pa r}{\pa y}\right)^2=1;
$$
the latter is constant on the normal straight-lines to IC and obeys the following Burgers-type equation
$$
\left|\begin{array}{cc}
\pa s/\pa x &\pa s/\pa y\\
\al'(s) &\be'(s)
\end{array}
\right|=0.
$$
Both are subject to the following B\"acklund transformation
$$
\na r=[\al'(s)^2+\be'(s)^2]^{-\frac12}
\left[\begin{array}{c}
-\be'(s)\\
\al'(s)
\end{array}
\right]
$$
and exhibit singularities along the evolute of IC.

\setcounter{subsection}{0}
\setcounter{equation}{0}
\setcounter{theorem}{0}

\section{}

Here we sketch a method of continuing a two-dimensional geometric optical eikonal past a caustic. 
Though rigorous, such a method is slightly reminiscent of 
the so-called theory of complex rays --- cf. \cite{CLOT} or \cite{KFA}, for instance. 
It applies in the case where the refractive index equals $1,$ and can be used for simultaneously 
continuing solutions to non-viscous Burgers equation beyond shock lines. 

\subsection{}  
Our method involves {\it analytic continuation} from the real-number axis into the complex plane --- 
an ill-posed process in the sense of Hadamard. Let $h$ be a real or complex-valued 
function of a real variable, or even a list of samples. 
An analytic continuation of $h$ is a holomorphic function of a complex variable, 
whose domain surrounds the real axis and whose restriction to the real axis fits $h$ well --- in other words, 
a solution $H$ of the following initial value problem for Cauchy-Riemann equation
$$
\frac{\pa H}{\pa y}=i\,\frac{\pa H}{\pa x}, \ \ H(\cdot,0)\simeq h.
$$
\par
If $h$ is an {\it analytic function,} and is {\it not} polluted by noise, 
an analytic continuation $H$ of $h$ results from obvious formulas. For example,
$$
H(x,y)=\sums_{k=0}^\infty \frac{d^k h(x)}{dx^k}\frac{(i y)^k}{k!};
$$
or
$$
H(x,y)=\frac1{2\pi}\ints_{-\infty}^\infty \exp[i \xi (x+iy)]\, \hh(\xi)\, d\xi,
$$
where hat denotes Fourier transformation. 
\par
If $h$ collects {\it gross data,} an effective analytic continuation $H$ of $h$ can be obtained 
by analytically continuing an appropriate, smoothed and denoised version of $h.$ Consider for instance the case where
\begin{eqnarray*}
&-\infty<a<b<\infty,\\
&N=\mbox{ an integer larger than $1,$}
\end{eqnarray*}
a {\it mesh size} is given thus
$$
\De x=(b-a)/(N-1),
$$
{\it nodes} are given by
$$
x_j=a+(j-1) \De x \ \ (j=1,\dots, N);
$$
and $h$ stands for
$$
h_1,\cdots, h_N,
$$
a sequence of {\it noisy samples.}
\par
An ad hoc analytic continuation $H$ solves the following least square problem
$$
\De x\,\sums_{j=1}^N [H(x_j,0)-h_j]^2+\frac{\la}{2L}\,\ints_{a-\De x/2}^{b+\De x/2} dx 
\ints_{-L}^L |H(x,y)|^2 dy=\mbox{minimum}
$$
in a convenient class of holomorphic functions --- e.g. the class
of trigonometric polynomials of a suitable degree. Here
$$
\frac{\la}{2L}\,\ints_{a-\De x/2}^{b+\De x/2} dx 
\ints_{-L}^L |H(x,y)|^2 dy
$$
plays the role of a {\it penalty;} $\la$ and $L$ are {\it regulating parameters} 
--- $\la$ is related to {\it noise,}
$L$ is related to {\it a priori information.}
\par
An explicit expression of $H$ can be derived via discrete Fourier transforms. 
Suppose for simplicity that $N$ is odd, say
$$
N=2n+1 \ \ (n=1,2, \dots);
$$
let 
$$
T=N\,\De x,
$$
and let $DFT$ be the discrete Fourier transform that obeys the following equations
\begin{eqnarray*}
&\ds DFT_k(h)=\sums_{j=1}^N h_j\,\exp\left(-2\pi i k \frac{x_j}{T}\right) \ \ (k=-n, \dots, n),\\
&\ds h_j=\frac1{N}\,\sums_{k=-n}^n DFT_k(h)\,\exp\left(2\pi i k \frac{x_j}{T}\right) \ \ (j=1, \dots, N),\\
&\ds \frac1{N}\,\sums_{k=-n}^n |DFT_k(h)|^2=\sums_{j=1}^N |h_j|^2
\end{eqnarray*}
--- cf. \cite{BH}, for instance. If
$$
C_0=1, \ C_k=\frac{\sinh(4\pi k L/T)}{4\pi k L/T}, \  C_{-k}=C_k \ \ (k=1, \dots, n),
$$
then
$$
H(x,y)=\sums_{k=-n}^{n} (1+\la C_k)^{-1} DFT_k(h)\,\exp\left(2\pi i k \frac{x+iy}{T}\right).
$$
This is a $T$-periodic trigonometric polynomial of degree $n$ that enjoys the 
following properties
\begin{eqnarray*}
&\ds \sums_{j=1}^N |H(x_j,0)-h_j|^2\le \left(\frac{\la}{\la+1/C_n}\right)^2\,\sums_{j=1}^N |h_j|^2,\\
&\ds \frac1{2L}\,\ints_{a-\De x/2}^{b+\De x/2} dx 
\ints_{-L}^L |H(x,y)|^2 dy\le \frac{\De x}{4\la},
\end{eqnarray*}
and
$$
H(x,y)=\sums_{j=1}^N H(x_j,0)\, D_N\left(2\pi \frac{x-x_j+i y}{T}\right)
$$
--- here $D_N$ denotes the Dirichlet, or periodic sinc function obeying
$$
D_N(x)=\frac{\sin(N x/2)}{N \sin(x/2)}
$$
if $x/(2\pi)$ is not an integer.
\par
More information on analytic continuation can be found in \cite{Abd}, \cite{Al}, \cite{BV}, \cite{BK},
\cite{CM}, \cite{CS1}-\cite{CS2}, \cite{DS}, \cite{Dou}-\cite{Dou2}, \cite{Fd1}, \cite{Fd2}, \cite{Fr2}, 
\cite{FITI}, \cite{Gu}, \cite{Ho}, \cite{Lv}, \cite{LA}, \cite{Le}, \cite{LRS}, \cite{Mlr2}-\cite{Mi4}, \cite{MV}, 
\cite{Re}, \cite{Ste}, \cite{Say}, \cite{TA}, \cite{Uz}, \cite{Ve}, \cite{Vu}, \cite{Zha}.

\setcounter{equation}{0}
\setcounter{theorem}{0}

\subsection{} 
Consider a plane curve $C$ that either is inherently smooth or results from 
a suitable smoothing process of raw data. Assume $C$ is {\it analytic} and {\it its curvature vanishes nowhere.} 
For simplicity, assume $C$ is the graph of the following equation
\begin{equation*}
\label{funzapp}
y=f(x),
\end{equation*}
and $f$ is convex.
\par
Alternative parametric representations of $C,$ which are instrumental throughout, include
\begin{equation}
\label{parfunzapp}
x=t, \quad y=f(t),
\end{equation}
where parameter $t$ coincides with the abscissa; and
\begin{equation}
\label{parslopeapp}
x=g'(t), \quad y=t g'(t)-g(t),
\end{equation}
where parameter $t$ is the slope of the tangent straight-line. 
Here $g$ denotes the {\it Legendre conjugate} of $f$ --- recall from e.g. \cite{Rock} that $f$ and $g$ are related thus
\begin{equation*}
\label{legendreapp}
t=f'(x), \ x=g'(t), \ t\, x=f(x)+g(t), \ 1=f''(x)\, g''(t).
\end{equation*}
\par
Curve $C$ changes into a {\it caustic} under the following modus operandi. Let
\begin{equation}
\label{paramapp}
x=\al(t), \quad y=\be(t)
\end{equation}
be any parametric representation of $C,$ where $\al$ and $\be$ are analytic.
Let $\ga$ and $\ka$ stand for arc length and curvature, respectively --- in other words,
\begin{eqnarray*}
\label{}
&\ds\ga(t)=\int\sqrt{\al'(t)^2+\be'(t)^2}\, dt,\\
&\ds\ka=\frac{\al' \be''-\al'' \be'}{[(\al')^2+(\be')^2]^{3/2}}.
\end{eqnarray*}
The following pair
\begin{equation}
\label{curvcoord}
\left[\begin{array}{c}
x\\
y
\end{array}\right]=
\left[\begin{array}{c}
\al(s)\\
\be(s)
\end{array}\right]+
\frac{w-\ga(s)}{\sqrt{\al'(s)^2+\be'(s)^2}}\left[\begin{array}{c}
\al'(s)\\
\be'(s)
\end{array}\right]
\end{equation}
makes $s$ and $w$ curvilinear coordinates. 
As is easy to see,
the level lines of $s$ are {\it tangent straight-lines} to $C,$ 
the level lines of $w$ are {\it involutes} of $C$ --- orthogonal to one another.
\par
Equations \eqref{curvcoord} imply
\begin{equation*}
\label{ineqf}
y\le f(x),
\end{equation*}
and
\begin{equation*}
\label{sw}
s(x,y)=t, \quad w(x,y)=\ga(t),
\end{equation*}
if $x, y$ and $t$ obey \eqref{paramapp} --- 
$s$ and $w$ live below $C$ and satisfy precise conditions along $C.$ 
\par
We compute
\begin{equation*}
\label{jacobiano}
\frac{\pa (x,y)}{\pa(s,w)}=
\left[
\begin{array}{cc}
-\be'(s) &\al'(s)\\
\al'(s) &\be'(s)
\end{array}
\right]
\left[
\begin{array}{cc}
\ka(s)[w-\ga(s)] &0\\
0 &[(\al')^2+(\be')^2]^{-1/2}
\end{array}
\right]
\end{equation*}
to draw the following set:
\begin{equation}
\label{set1}
\frac{\pa}{\pa x}\, \al(s)+\frac{\pa}{\pa y}\, \be(s)=0,
\end{equation}
\begin{equation}
\label{set2}
\left(\frac{\pa w}{\pa x}\right)^2 +\left(\frac{\pa w}{\pa y}\right)^2=1,
\end{equation}
\begin{equation}
\label{set3}
\frac{\pa s}{\pa x} \frac{\pa w}{\pa x}+\frac{\pa s}{\pa y} \frac{\pa w}{\pa y}=0,
\end{equation}
\begin{equation}
\label{set4}
\na w=[\al'(s)^2+\be'(s)^2]^{-1/2}
\left[
\begin{array}{c}
\al'(s)\\
\be'(s)
\end{array}
\right],
\end{equation}
\begin{equation}
\label{set5}
|\na s|^{-2}=[w-\ga(s)]^2 \ka(s)^2 [\al'(s)^2+\be'(s)^2],
\end{equation}
\begin{equation}
\label{set6}
\left[
\begin{array}{cc}
w_{xx} &w_{xy}\\
w_{xy} &w_{y}
\end{array}
\right]=
\frac{1}{\ga(s)-w}\left[
\begin{array}{c}
-w_y\\
w_x
\end{array}
\right] [-w_y\,\, w_x].
\end{equation}
\par
Equation \eqref{set1} is a {\it conservation law;} it reads
\begin{equation*}
\label{burgers}
\frac{\pa s}{\pa x} +s\,\frac{\pa s}{\pa y}=0,
\end{equation*}
the standard {\it Burgers equation,} if  \eqref{parslopeapp} is in force. Equation  \eqref{set2} is the equation of geometrical 
optics in hand. Equation  \eqref{set3} shows that the gradients of $s$ and $w$ are orthogonal. Equation  \eqref{set4}
shows that both $C$ and the tangent straight-lines to $C$ are lines of steepest descent of $w;$ it also shows 
that the straight-lines in question are isoclines of $w.$ Equation \eqref{set4} can be viewed as a {\it B\"acklund 
transformation,} which converts any solution to \eqref{set1}  into a solution to \eqref{set2}. It reads
\begin{equation*}
\label{backlund}
\na w=[1+f'(s)^2]^{-1/2}\left[
\begin{array}{c}
1\\
f'(s)
\end{array}
\right], \quad s=g'\left(\frac{w_y}{w_x}\right),
\end{equation*}
or simply
\begin{equation*}
\label{backlund2}
\na w=[1+s^2]^{-1/2}\left[
\begin{array}{c}
1\\
s
\end{array}
\right], \quad s=\frac{w_y}{w_x},
\end{equation*}
depending on whether \eqref{parfunzapp}  or \eqref{parslopeapp} is in effect. 
Equations \eqref{set5} and \eqref{set6} show that both the gradient 
of $s$ and the second-order derivatives of $w$ blow up near $C.$
\par
We infer that $s$ is governed by a Burgers-type equation, and develops shocks 
along $C.$ The following objects --- $w, C,$ the tangent straight-lines to $C,$ 
and the region below $C$ --- are a {\it geometric optical eikonal,} the relevant {\it caustic,} 
the {\it rays,} and the {\it light region,} respectively.
\par
We now claim: (i) $s$ and $w$ can be {\it continuously extended} into the region where
\begin{equation}
\label{ineqf2}
y>f(x),
\end{equation}
the {\it dark side} of $C,$ if suitable imaginary parts are provided; (ii) the relevant extensions obey 
equations \eqref{set1} to \eqref{set6}.
\par
The points above $C$ are reached by no tangent straight-line to $C,$ of course. We insist in drawing 
tangent straight-lines from these points, but allow complex slopes. In other words, we recast \eqref{curvcoord} 
this way
\begin{equation*}
\label{lines}
\begin{array}{cc}
&-\be'(s) [x-\al(s)]+\al'(s)[y-\be(s)]=0, \\
&[w-\ga(s)][\al'(s)^2+\be'(s)^2]^{1/2}=\al'(s) [x-\al(s)]+\be'(s)[y-\be(s)],
\end{array}
\end{equation*}
and force such equations to hold in the situation where
$$
\re(s)=\la, \quad \im(s)=\mu, \quad \im(x)=\im(y)=0.
$$
\par
The following formulas result
\begin{eqnarray}
\label{analcont}
&\ds\ \ \  x\!=\!\frac{\im[\ovr{\al'(s)}(\al(s)\be'(s)\!-\!\al'(s)\be(s))]}{\im[\ovr{\al'(s)}\be'(s)]}, \ 
y\!=\!\frac{\im[\ovr{\be'(s)}(\be(s)\al'(s)\!-\!\be'(s)\al(s))]}{\im[\ovr{\be'(s)}\al'(s)]},\nonumber \\
\nonumber\\
&s=\la+i\mu,  \\
\nonumber\\
&\ds w=\ga(s)-\frac1{\sqrt{\al'(s)^2+\be'(s)^2}}\left\{\ovr{\al'(s)}\frac{\im\,\be(s)}{\im[\ovr{\al'(s)}\be'(s)]}
+\ovr{\be'(s)}\frac{\im\,\al(s)}{\im[\ovr{\be'(s)}\al'(s)]}\right\},\nonumber
\end{eqnarray}
where $\al, \be$ and $\ga$ stand for the analytic continuations of the original objects.
\par
Formulas \eqref{analcont} answer the claim. Among other things, they give 
\begin{eqnarray*}
\label{asympt}
\ds  \left[\!\begin{array}{c}
x\\
y
\end{array}
\!\right]\!=\!
\left[\!\begin{array}{c}
\al(\la)\\
\be(\la)
\end{array}
\!\right]\!+
\frac{dA(\la)}{d\la}\,
\left[\!\begin{array}{c}
\!\al'(\la)\\
\be'(\la)
\end{array}
\!\right]\,\mu^2+B(\la)\,\left[\!\begin{array}{c}
-\be'(\la)\\
\al'(\la)
\end{array}
\!\right]\,\mu^2+O(\mu^4),\nonumber\\
A=-\frac13\,\log|\ka|-\frac12\log\sqrt{(\al')^2+(\be')^2}, \quad
B=\frac{\ka}{2}\,\sqrt{(\al')^2+(\be')^2},
\end{eqnarray*}
as $\mu$ approaches zero, and
\begin{eqnarray*}
\det\frac{\pa (x,y)}{\pa (\la,\mu)}=
\frac{|\al'(s)\im\,\be(s)-\be'(s)\im\,\al(s)|^2|\al'(s) \be''(s)-\al''(s)\be'(s)|^2}
{\left[\im\,\ovr{\al'(s)}\be'(s)\right]^3};
\end{eqnarray*}
consequently
\begin{equation*}
\label{y-fx}
y-f(x)=\frac12\, f''(\al(\la))\,[\im\, \al(\la+i\mu)]^2+O(\mu^4)
\end{equation*}
and
\begin{equation*}
\label{jacasympt}
\det\frac{\pa (x,y)}{\pa (\la,\mu)}=\mu\, [\al'(\la)\be''(\la)-\al''(\la)\be'(\la)+O(\mu^2)]
\end{equation*}
as $\mu$ approaches zero. Thus \eqref{analcont}  imply \eqref{ineqf2}, as well as 
\begin{equation*}
\label{jacnotzero}
\det\frac{\pa (x,y)}{\pa (\la,\mu)}\not=0
\end{equation*}
if $\mu$ is different from, and sufficiently close to $0.$

\subsection{}
Here is an example.
A {\it catenary} is the graph of either the following equation
\begin{equation*}
\label{caten1}
y =\cosh x
\end{equation*}
or the following equations
\begin{equation}
\label{caten2}
x=\log\bigl(t+\sqrt{1+t^2}\bigr),\quad y=\sqrt{1+t^2},
\end{equation}
where parameter $t$ coincides with both an arc length and the slope of the tangent straight-line.
\par
Consider solutions to the following equations
\begin{equation*}
\label{set23}
\left(\frac{\pa w}{\pa x}\right)^2 +\left(\frac{\pa w}{\pa y}\right)^2=1,\quad
\frac{\pa s}{\pa x} +s\,\frac{\pa s}{\pa y} =0,
\end{equation*}
which obey the following conditions
\begin{equation*}
\label{ws2}
w(x,y)=s(x,y)=t
\end{equation*}
as $x$ and $y$ obey \eqref{caten2}. Function $w$ and the catenary in question are an {\it eikonal} and the relevant {\it caustic,} 
respectively; $s$ obeys {\it Burgers equation,} takes a constant value on each tangent straight-line to the catenary
and develops shocks along the catenary.
\par
The {\it light region} is the set where
\begin{equation*}
-\infty<x<\infty, \quad y\le\cosh x;
\end{equation*}
the {\it shadow region} is the set above the catenary, where
\begin{equation*}
-\infty<x<\infty, \quad \cosh x<y.
\end{equation*}
The following pair
\begin{equation*}
\label{pair1}
\begin{array}{c}
\ds s\, x-y=s\,\log\bigl(s+\sqrt{1+s^2}\bigr)-\sqrt{1+s^2}, \\ 
\ds w=\frac1{\sqrt{1+s^2}}\, \left[x+s\,y-\log\bigl(s+\sqrt{1+s^2}\bigr)\right],
\end{array}
\end{equation*}
makes $s$ and $w$ implicit functions of $x$ and $y$ in the light region. 
Eikonal $w$ and its partner $s$ can be {\it continued} in a subset of the shadow region via the following equations
\begin{eqnarray*}
\label{catcontin}
&\ds x=\la+\tanh\la\, (\mu\cot\mu-1), \ y=\frac1{\cosh\la}\,
\left(\sinh^2\la\,\frac{\mu}{\sin\mu}+\mu\sin\mu+\cos\mu\right),\nonumber\\
&\ds s=\sinh(\la+i\mu),\\
&\ds w=\sinh\la\,\frac{\mu}{\sin\mu}+
\frac{i}{\cosh\la}\,(\sin\mu-\mu\,\cos\mu)\nonumber
\end{eqnarray*}
--- here $\la$ and $\mu$ are parameters such that
$$
-\infty<\la<\infty, \quad 0\le\mu\le\pi/2.
$$
\par
The foregoing equations imply that
\begin{equation*}
\label{asympcat}
\ds y-\cosh x=(\cosh\la)\, \mu^2\,\left[\frac12+O(\mu^2)\right],\\
\end{equation*}
as $\mu$ approaches $0.$ Furthermore,
\begin{eqnarray*}
\label{asympcat2}
&\ds\det\frac{\pa (x,y)}{\pa (\la,\mu)}=
\frac{[\mu^2+\tanh^2\la\,(1-\mu\cot\mu)^2](\sinh^2\la+\cos^2\mu)}{\cosh\la\sin\mu},\nonumber\\
&\ds\frac{\pa w}{\pa x}=\frac{1}{\cosh(\la+i\mu)}, \
\frac{\pa w}{\pa y}=\tanh(\la+i\mu)\nonumber
\end{eqnarray*}
--- in particular, a singularity occurs at the point whose coordinates are
$$
x=0,\quad y=\pi/2.
$$
\par
Figures 5 and 6 show plots of the imaginary parts of $w$ and $s.$

\begin{figure}[tbh]
\label{fig:imagw}
\centerline{
{\epsfxsize=3.5in
\epsfbox{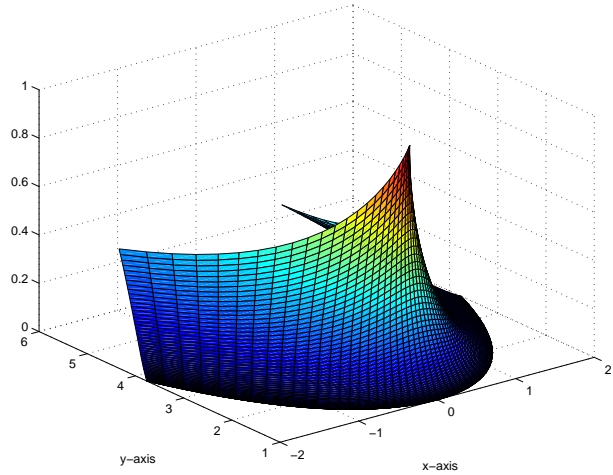}
} 
}
\caption{Eikonal equation: the imaginary part of $w$ beyond a caustic.}
\end{figure}

\begin{figure}[tbh]
\label{fig:imags}
\centerline{
 {\epsfxsize=3.5in
\epsfbox{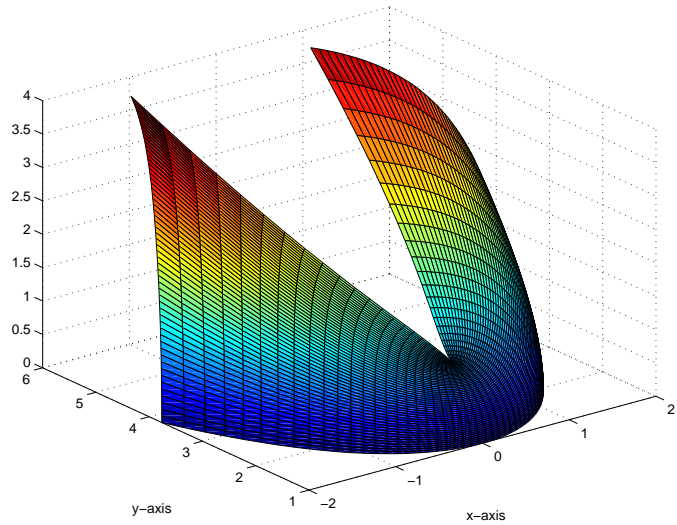}
}
}
\caption{Burgers equation: the imaginary part of $s$ beyond a shock-line.}
\end{figure}

\setcounter{equation}{0}
\setcounter{theorem}{0}

\section{}

\subsection{}  Differentiating a real-valued function of one real variable is among the most 
elementary processes of mathematical and numerical analysis, but is also a significant 
prototype of those problems that are nowadays qualified ill-posed in the sense of Hadamard. 
Methods of approximating derivatives of smooth functions under non-exact data have been 
widely experimented over the years. Here we take the opportunity of sketching one more of such methods. 
We consider the case where data consist of {\it discrete} and {\it noisy} samples, nodes are {\it equally spaced,} 
and information is available on both the relevant noise and the underlying smoothness. 
Our method is inspired by ideas that the theory of statistical learning has recently revived --- see e.g. \cite{CcSm}, \cite{EPP}, 
\cite{SmZh 1}-\cite{SmZh 4}, \cite{Vpn 1}-\cite{Vpn 4}   --- and of course mimics several of its ascendants --- see e.g. \cite{ACR}, \cite{Als}, \cite{AnBl 1}-\cite{AnBl 2}, \cite{AnHg}, \cite{Baa}, \cite{Brt}, \cite{CJW}, \cite{Cox}, \cite{Clm}, \cite{Dvs}, \cite{Dlg}, \cite{DoIv},
\cite{EgKn}, \cite{Fr1}, \cite{Gr}, \cite{JhRs}, \cite{KnMr}, \cite{KnWl}, \cite{Klp}, \cite{LuPr}, \cite{LuWa}, \cite{MlMn}, \cite{Mr 1}-\cite{Mr 2}, \cite{MrGu}, \cite{MMZ}, \cite{Olv}, \cite{Rmm 1}-\cite{Rmm 2}, \cite{RmSm}, \cite{RcRs}, 
\cite{Skl}, \cite{StLn}, \cite{Srv}, \cite{Vsn}, \cite{Wng}.

\par
Items in input include:
\begin{itemize}
\item[(i)] the {\it end points} of a bounded interval --- $a$ and $b;$
\item[(ii)] the {\it number} of both nodes and samples --- an integer $N,$ larger than $2;$
\item[(iii)] {\it nodes} from $a$ to $b$ --- specifically,
$$
x_k=a+(k-1)\,\frac{b-a}{N-1} \quad (k=1,\dots, N);
$$
\item[(iv)] {\it noisy samples} --- a sequence
$$
g_1, g_2,\dots, g_N
$$
of real numbers whatever.
\end{itemize}
\par
Goals include recovering some function $f$  and the derivative $f'$ of $f$ based upon the following information only:
\begin{itemize}
\item[(v)]  $f$ is {\it smooth;}
\item[(vi)] $f(x_k)$ is {\it close} to $g_k,$ for $k = 1, 2,\dots, N.$
\end{itemize}
\par
Our recipe segments into the following three steps. First, let $\la$ and $\mu$ be {\it positive parameters,} 
and solve the following variational problem
\begin{equation}
\label{varpb}
\sums_{k=1}^N [u(x_k)-g_k]^2
+\la\ints_{-\infty}^\infty \left[ \mu^7 (u'''')^2+\mu^{-1} u^2\right]\, dx=\mbox{minimum,}
\end{equation}
under the following condition
\begin{equation}
\label{compfcn}
u \ \mbox{ belongs to Sobolev space } W^{4,2}(-\infty,\infty).	
\end{equation}
Second, adjust $\la$ and $\mu$ properly. Third, take $u,$ $u'$ as approximations of $f,$ $f'.$

\subsection{}
Effective formulas read as follows.
\par
Rudiments of the calculus of variations demonstrate that Problem \eqref{varpb} \& \eqref{compfcn}
possesses a unique solution; moreover that such a solution --- named $u$ throughout --- obeys
\begin{equation}
\label{eqeul}
\la\left(\mu^7\,\frac{d^8 u}{dx^8}+\mu^{-1} u\right)+\sums_{k=1}^N [u(x_k)-g_k]\,\de(x-x_k)=0
\end{equation}
for $-\infty<x<\infty.$ The following features are decisive: equation \eqref{eqeul} is {\it affine;}
$$
\mu^7\,\frac{d^8 }{dx^8}+\mu^{-1} 
$$
is a {\it positive} operator in $L^2(-\infty,\infty),$ whose inverse {\it mollifies}; 
$$
\sums_{k=1}^N [u(x_k)-g_k]\,\de(\cdot-x_k)
$$
is a {\it spike train} or a {\it shah-function.}
\par
Condition \eqref{compfcn} and equation \eqref{eqeul} give
\begin{equation}
\label{soleqeul}
-\la\, u(x)=\sums_{k=1}^N [u(x_k)-g_k]\,K\left(\frac{x-x_k}{\mu}\right)
\end{equation}
for $-\infty<x<\infty.$ Equation \eqref{soleqeul} gives
\begin{equation}
\label{syseqeul}
A\,\left[
\begin{array}{c}
u(x_1)\\\vdots\\u(x_N)
\end{array}\right]+
\la\,\left[
\begin{array}{c}
u(x_1)\\\vdots\\u(x_N)
\end{array}\right]=A\,\left[
\begin{array}{c}
g_1\\\vdots\\g_N
\end{array}\right].
\end{equation}

\begin{figure}[tbh]
\label{fig:K}
\centerline{
{\epsfxsize=3.5in
\epsfbox{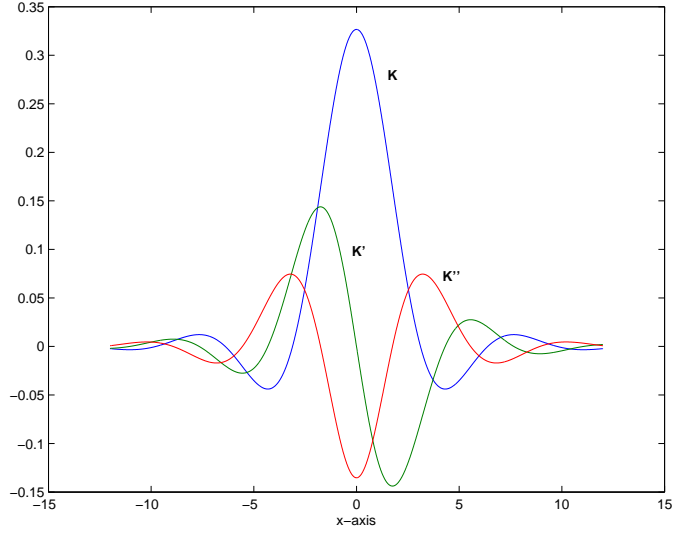}}
}
\caption{Plots of $K, K'$ and $K''.$}
\end{figure}

\par
Here $K$ denotes an appropriate {\it fundamental solution} to $d^8/dx^8 +1,$ namely the solution to
$$
\frac{d^8 K}{dx^8}+K=\de(x) 
$$
that decays at infinity --- $K$ is given by
\begin{equation}
\label{formK}
\pi\, K(x)=\ints_0^\infty\frac{\cos(x\xi)}{1+\xi^8}\, d\xi
\end{equation}					
for $-\infty<x<\infty,$ is {\it even} and {\it positive definite.} Secondly,
\begin{equation*}
\label{formA}
A=\left[
\begin{array}{cccc}
\ds K(0) &\ds K\left(\frac{x_1-x_2}{\mu}\right) & \hdots &\ds K\left(\frac{x_1-x_N}{\mu}\right)\\
\\
\ds K\left(\frac{x_2-x_1}{\mu}\right) & K(0) & \hdots &\ds K\left(\frac{x_2-x_N}{\mu}\right)\\
\vdots & \vdots & \ddots  & \vdots\\
\ds K\left(\frac{x_N-x_1}{\mu}\right) &\ds K\left(\frac{x_N-x_2}{\mu}\right)  & \hdots & K(0)
\end{array}\right]
\end{equation*}	
--- a {\it symmetric, positive definite} Toeplitz matrix.
\par
Let $\nu=\pi/8.$ Manipulating formula \eqref{formK} gives
$$
K(x)=\frac1{8 \sin\nu}-\frac1{8 \cos\nu}\frac{x^2}{2}+\frac1{8 \cos\nu}\frac{x^4}{24}-
\frac1{8 \sin\nu}\frac{x^6}{720}+\frac12 \frac{|x|^7}{5040}+O(x^8)
$$
as $x$ approaches zero --- $K$ behaves near zero like a {\it spline} of order seven. 
Calculus of residues, or convenient formulas from [PBM, Section 2.5.10], give
$$
4 K(x)=e^{-|x| \cos\nu}\cos(|x|\sin\nu-\nu)+e^{-|x| \sin\nu}\sin(|x|\cos\nu+\nu)
$$
as $-\infty<x<\infty.$ Therefore,
\begin{eqnarray*}
\frac{d^n K}{dx^n}(x)=\frac{(-\sgn x)^n}{4}  \left\{e^{-|x| \cos\nu}\cos(|x|\sin\nu-(n+1)\nu)+\right.\\
\left. 
e^{-|x| \sin\nu}\sin(|x|\cos\nu+(n+1)\nu-n\pi/2)\right\}
\end{eqnarray*}
as $x\not=0$ and $n = 1, 2, 3,\cdots.$ Figure 7 shows plots of $K, K', K''.$
\begin{figure}[tbh]
\label{fig:cond}
\centerline{
{\epsfxsize=3.5in
\epsfbox{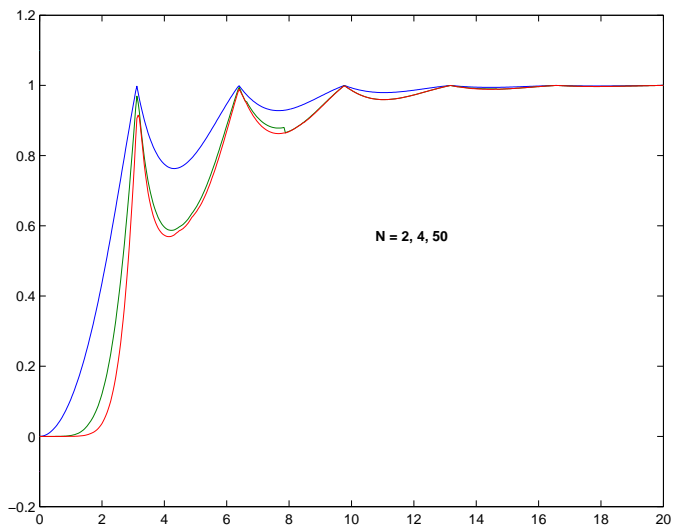}}
}
\caption{Reciprocal condition estimator of $A,$ plotted versus the ratio $(b-a)/((N-1)\mu).$}
\end{figure}
\par
Analysis shows the following. The spectrum of $A$ lies in the open 
interval $]0,N/(8\sin\nu)[.$ 
All eigenvalues of $A$ are close to $1/(8\sin\nu),$ if $\mu (N-1)/(b-a)$ is small; 
otherwise, the largest eigenvalue of $A$ is close to $N/(8\sin\nu)$ and most remaining eigenvalues of $A$ are close to $0.$ 
In particular, $A$ is {\it invertible} anyway; $A$ is either {\it well-conditioned} or {\it ill-conditioned} 
depending on whether $\mu (N-1)/(b-a)$ is small or large. 
Figure 8 shows plots of a reciprocal condition estimator of $A$ versus $(b-a)/((N-1)\mu).$
\par
Equation \eqref{soleqeul} implies that $u$ belongs to the {\it linear span} of
$$
K\left(\frac{\mute-x_1}{\mu}\right),\dots,K\left(\frac{\mute-x_N}{\mu}\right)
$$
--- {\it translations} and {\it dilations} of $K.$ Such items own either a spike-shaped or 
a well-rounded profile depending on whether $\mu$ is small or large, inasmuch as
$$
\frac{\int_{-\infty}^\infty|(d/dx)\mbox{Item}|^2\, dx}{\int_{-\infty}^\infty|\mbox{Item}|^2\, dx}=
\frac57\,\mu^{-2}\tan\nu,\ \
\frac{\int_{-\infty}^\infty|(d^2/dx^2)\mbox{Item}|^2\, dx}{\int_{-\infty}^\infty|\mbox{Item}|^2\, dx}=
\frac37\,\mu^{-4}\tan\nu.
$$
The same items are definitely {\it linearly independent,} although appropriate formulas and analysis show that 
their Gram matrix is well-conditioned only if $\mu (N-1)/(b-a)$ is small enough.
\par
Equation \eqref{syseqeul} determines $u(x_1),\dots, u(x_N)$ in terms of data. 
Note that they
solve
$$
\sums_{k=1}^N [u(x_k)-g_k]^2+\la\,[u(x_1)\,\cdots\, u(x_N)]\, A^{-1}\,\left[
\begin{array}{c}
u(x_1)\\\vdots\\u(x_N)
\end{array}\right]=\mbox{minimum}
$$
--- a standard {\it finite-dimensional least-square} problem, where $\la$ and the inverse of $A$ 
imitate a {\it regulating parameter} \` a la Tikhonov and a {\it penalty,} respectively.
\par
Now we are in a position to draw conclusions. Let $\id$ be the $N\times N$ {\it unit matrix,} and
\begin{equation*}
\label{resolvent}
R=(\la\,\id+A)^{-1}
\end{equation*}
--- a {\it resolvent.} Let $B$ the vector-valued function such that
\begin{equation*}
\label{altbasis}
B(x)=R\,\left[
\begin{array}{c}
\ds K\left(\frac{x-x_1}{\mu}\right)\\\vdots\\\ds K\left(\frac{x-x_N}{\mu}\right)
\end{array}\right]
\end{equation*}
for $-\infty<x<\infty$ --- an {\it alternative basis} in the linear span mentioned above. Define $C$ and $D$ thus
\begin{equation*}
\label{defC}
C=A\,R
\end{equation*}						
\vskip.02cm
\begin{equation*}
\label{defD}
D=\mu^{-1}\,\left[
\begin{array}{cccc}
0 &\ds K'\left(\frac{x_1-x_2}{\mu}\right) & \hdots &\ds K'\left(\frac{x_1-x_N}{\mu}\right)\\
\\
\ds K'\left(\frac{x_2-x_1}{\mu}\right) & 0 & \hdots &\ds K'\left(\frac{x_2-x_N}{\mu}\right)\\
\vdots & \vdots & \ddots  & \vdots\\
\ds K'\left(\frac{x_N-x_1}{\mu}\right) &\ds K'\left(\frac{x_N-x_2}{\mu}\right)  & \hdots & 0
\end{array}\right]\,R.
\end{equation*}			      
The following equations hold.
\begin{equation*}
\label{formu}
u(x)=[g_1\,\cdots\, g_N]\, B(x)
\end{equation*}				          
for $-\infty<x<\infty,$
\begin{equation*}
\label{equ1}
\left[
\begin{array}{c}
u(x_1)\\\vdots\\u(x_N)
\end{array}\right]=
C\,\left[
\begin{array}{c}
g_1\\\vdots\\g_N
\end{array}\right],
\end{equation*}						       
\begin{equation*}
\label{equ2}
\left[
\begin{array}{c}
u'(x_1)\\\vdots\\u'(x_N)
\end{array}\right]=
D\,\left[
\begin{array}{c}
g_1\\\vdots\\g_N
\end{array}\right].
\end{equation*}						       

\begin{figure}[tbh]
\label{fig:6}
\centerline{
{\epsfxsize=3.5in
\epsfbox{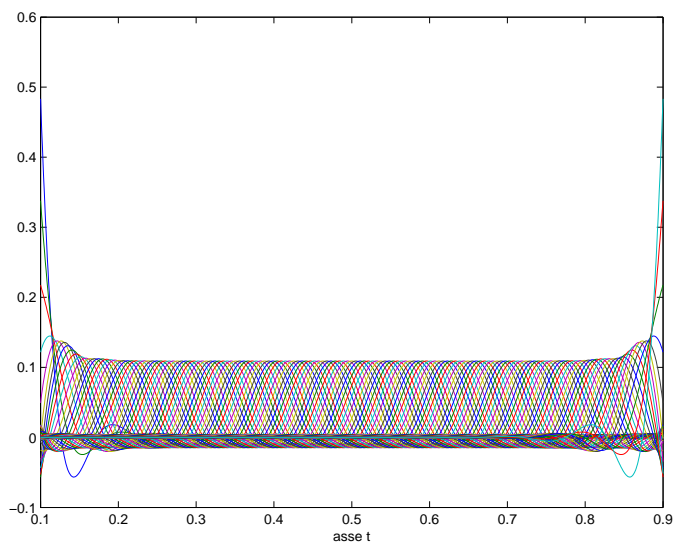}}
}
\caption{The alternative basis $B:$ plots of
$B_1(x),\dots, B_N(x)$ versus $x.$}
\end{figure}

\begin{figure}[tbh]
\label{fig:7}
\centerline{
{\epsfxsize=3.5in
\epsfbox{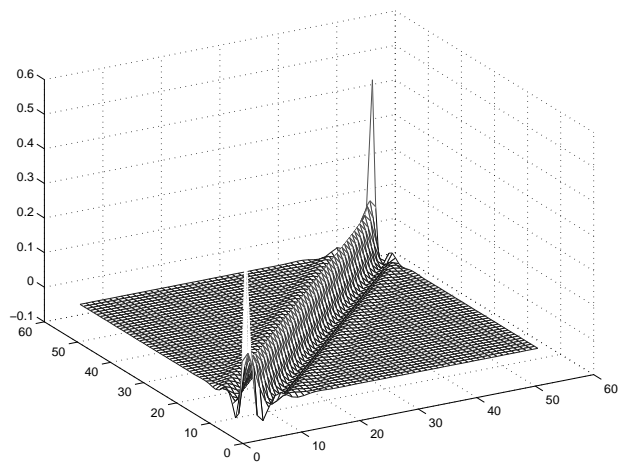}}
}
\caption{Matrix $C$ playing the role of a regularizing filter.}
\end{figure}

\begin{figure}[tbh]
\label{fig:8}
\centerline{
{\epsfxsize=3.5in
\epsfbox{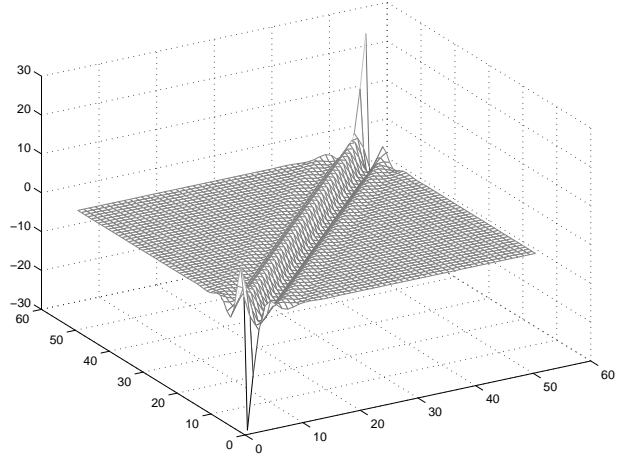}}
}
\caption{Matrix $D,$ simulating differentiation.}
\end{figure}
\par
As figures 9, 10 and 11 show, $B$ and $C$ mimic a typical {\it Green's function} from two different perspectives, 
$D$ mimics a {\it derivative} of a Green's function. Observe incidentally that
$$
\ints_{-\infty}^\infty\left[\mu^7(B'''')^T B''''+\mu^{-1}B^T\,B\right] dx\!=\!
\tr\!\left[A\,(\la\,\id+A)^{-2}\right], \  C=\id-\la\,(\la\,\id+A)^{-1},
$$
and that $B$ and $C$ solve the following variational problem
$$
\la\,\ints_{-\infty}^\infty\left[\mu^7(B'''')^T B''''+\mu^{-1}B^T\,B\right]\, dx+
\tr\left[(C^T-\id)(C-\id)\right]=\mbox{minimum},
$$
subject to the following conditions
$$
B\in [W^{4,2}(-\infty,\infty)]^N, \ \ C=\left[
\begin{array}{cccc}
B_1(x_1) & B_2(x_1) & \hdots & B_N(x_1)\\
B_1(x_2) & B_2(x_2) & \hdots & B_N(x_2)\\
\vdots & \vdots & \ddots  & \vdots\\
B_1(x_N) & B_2(x_N)  & \hdots & B_N(x_N)
\end{array}\right].
$$

\subsection{} Here we offer directions for adjusting $\la$ and $\mu$ properly.
\par
Parameter $\la$ --- {\it dimensionless} --- discriminates whether solution $u$ to Problem \eqref{varpb} \& \eqref{compfcn} 
either fits data well (but is simultaneously sensible of noise), or else is little affected by noise (but departs somewhat from data). 
In loose terms, the following statements hold. First, $u$ virtually {\it interpolates} $g_1,\dots,g_N$  
if $\la$ is close to zero; however, $u$ is liable to own an {\it irregular} profile at the same time. 
Second, $u$ {\it quenches smoothly} if $\la$ grows larger and larger. Indeed,
$$
\ints_{-\infty}^\infty\left[\mu^7(B'''')^T B''''+\mu^{-1}B^T\,B\right]\, dx=\tr A^{-1}+O(\la),
$$
$$
C=\id+O(\la)
$$
as $\la$ approaches zero;
$$
\nr d^nB(x)/dx^n\nr\le\la^{-1}\,\frac{\sqrt{N}}{8\sin[(n+1)\nu]}
$$
as $-\infty<x<\infty$ and $n=0,1,\dots, 6.$
\par
Parameter $\mu$ --- making $\mu/(b-a)$ {\it dimensionless} --- determines how much $u$ is close to, 
or departs from a spike train. Indeed,
$$
B(x)=\mu\,\frac{8\sin\nu}{1+\la\sin\nu}\,\left[
\begin{array}{c}
\de(x-x_1)\\\vdots\\\de(x-x_N)
\end{array}\right]+O(\mu^{3})
$$
as $-\infty<x<\infty$ and $\mu$ approaches zero;
$$
B(x)=\frac1{N+8\la\sin\nu}\,\left[
\begin{array}{c}
1\\\vdots\\1
\end{array}\right]+O(\mu^{-2})
$$
as $-\infty<x<\infty$ and $\mu$ grows larger and larger.
\begin{figure}[tbh]
\label{fig:srq}
\centerline{
{\epsfxsize=3.7in
\epsfbox{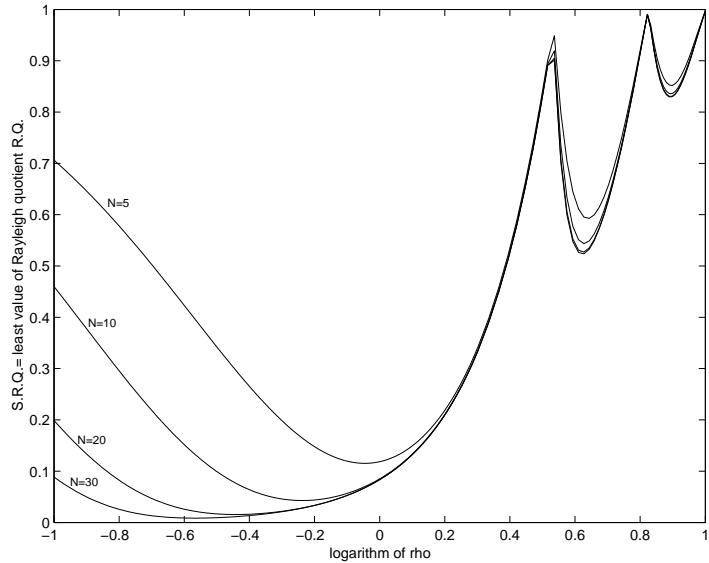}}
}
\caption{Plots of S.R.Q. versus $\log\rho.$ Here $\rho = (b-a)/((N-1)\mu)$.}
\end{figure}
\par
In the case where suitable information is available {\it a priori,} 
a theorem from Subsection \ref{sec:C4} below suggests which values of $\la$ and $\mu$ work properly. 
Otherwise, parameter $\la$ may be determined based upon the {\it discrepancy principle,} 
the {\it cross-validation,} the {\it $L$-curve criterion,} or other customary devices --- see e.g. 
\cite[Chapter 7]{Han} for details.
\par
Parameter $\mu$ is expediently identified by the following recipe. Let
$$
\mbox{R.Q. of } v=\mu^{14}\,\frac{\int_{-\infty}^\infty\left[(d/dx)^7 v(x)\right]^2 dx}{\int_{-\infty}^\infty\left[v(x)\right]^2 dx}
$$
a dimensionless {\it Rayleigh quotient;} define a relevant minimum thus
$$
\mbox{S.R.Q.}=\min\left\{\mbox{R.Q. of } v: 0\not=v\in\,\mbox{span of } K\left(\frac{\mute-x_1}{\mu}\right),\dots, K\left(\frac{\mute-x_N}{\mu}\right)\right\};
$$
then determine $\mu$ so that
\begin{equation}
\label{SRQ}
\mbox{S.R.Q.}=\mbox{minimum}.
\end{equation}
\par
Equation \eqref{SRQ} causes solution $u$ of Problem \eqref{varpb} \& \eqref{compfcn}  to retain 
its most favorable Rayleigh quotient. Equation \eqref{SRQ} can be approached via equation \eqref{SRQ2}
below and tools from linear algebra, although care must be taken of the ill-condition of the involved matrices.
\par
Let
$$
G = \mu^{-1}\times\mbox{Gram matrix of } K\left(\frac{\mute-x_1}{\mu}\right),\dots, K\left(\frac{\mute-x_N}{\mu}\right),
$$
$$
H = \mu^{-1}\times\mbox{Gram matrix of } K^{(7)}\left(\frac{\mute-x_1}{\mu}\right),\dots, K^{(7)}\left(\frac{\mute-x_N}{\mu}\right);    
$$
let $L$ denote the {\it autocorrelation function} of $K,$ videlicet
$$
\pi L(x)=\ints_0^\infty(1+\xi^8)^{-2}\cos(x\xi)\,d\xi
$$
for $-\infty<x<\infty.$ We have
$$
G=\left[L\left(\frac{x_j-x_k}{\mu}\right)\right]_{j,k=1,\dots, N}, \ \ H=\left[-L^{(14)}\left(\frac{x_j-x_k}{\mu}\right)\right]_{j,k=1,\dots, N};
$$
$$
L(x)=\frac78\, K(x)-\frac{x}{8}\,K'(x), \ \ L^{(14)}(x)=\frac78\, K^{(6)}(x)+\frac{x}{8}\,K^{(7)}(x)
$$
for $-\infty<x<\infty,$ and
\begin{equation}
\label{SRQ2}
\mbox{S.R.Q. = least eigenvalue of $H$ with respect to $G$}	
\end{equation}
--- in other words,
$$
\mbox{S.R.Q. = least eigenvalue of $G^{-1/2} H\,G^{-1/2}$}.
$$
\par
Figure 12 shows plots of S.R.Q. versus $(b-a)/((N-1)\mu).$ The following table
\vskip.1cm
$$
\begin{array}{lccc}
N         &     \mbox{minimum value}   &     (b-a)/((N-1)\mu)    &     \mu/(b-a)\\
\\
5	&	0.115320757000	&	0.900127730000	&	0.277738360532\\
10	&	0.042825969700	&	0.578865888000	&	0.191946206219\\
20	&	0.015529762000	&	0.361101444000	&	0.145752889726\\
30	&	0.008487160550	&	0.271108504000	&	0.127191726235\\
40	&	0.005502126710	&	0.220171280000	&	0.116459447577\\
50	&	0.003920874640	&	0.187117585000	&	0.109065982576\\
60	&	0.002967795760	&	0.163520070000	&	0.103651818045\\
70	&	0.002342495540	&	0.145755736000	&	0.099431789245\\
80	&	0.001906845850	&	0.131825897000	&	0.096022315313\\
100	&	0.001349762400	&	0.111466361000	&	0.090619358257\\
120	&	0.001016274330	&	0.097183852500	&	0.086468699567\\
140	&	0.000798662701	&	0.086282601400	&	0.083380015062\\
150	&	0.000716797334	&	0.081805225800	&	0.082041328416
\end{array}
$$
\vskip.1cm\noindent
lists sample solutions to equation \eqref{SRQ}. 
\par
The following formula
\begin{equation}
\label{apprmu}
\frac{\mu}{b-a}
\approx 
0.0415+0.5416\times N^{-1/2}-0.6426\times N^{-1}+1.3706\times N^{-3/2}
\end{equation}
gives an effectual estimate of such solutions.

\subsection{} 
\label{sec:C4}
The following theorem holds.
\begin{theorem}
Suppose $u$ solves problem \eqref{varpb} \& \eqref{compfcn}; suppose $f, \eps$ and $E$ obey
$$
|f(x_k)-g_k|\le\eps \ \ (k=1,\dots, N),
$$
$$
\mu^{-4}\ints_{-\infty}^\infty f(x)^2dx+\mu^{4}\ints_{-\infty}^\infty [f''''(x)]^2dx\le 2(b-a)^{-3} E^2;
$$
let
$$
\de=\max\left\{(1-1/N)^{-1/2}\frac{\eps}{E}, (N-1)^{-2}\right\}.
$$
If $\de$ approaches zero, and
$$
\frac{\la}{N}\,\left(\frac{\mu}{b-a}\right)^3\, \left(\frac{\eps}{E}\right)^{-2}
$$
is bounded and bounded away from zero, then
$$
E^{-1}\max\{ |f(x)-u(x)|: a\le x\le b\}=O(\de^{3/4}), 
$$
$$
E^{-1}(b-a)\,\max\{ |f'(x)-u'(x)|: a\le x\le b\}=O(\de^{1/4}).
$$
\end{theorem}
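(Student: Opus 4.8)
The plan is to run a standard Tikhonov-type variational comparison and then finish with a sampling (Sobolev-for-scattered-data) inequality. Write $w=f-u$ and let $\|v\|_\mu^2=\int_{-\infty}^\infty[\mu^7(v'''')^2+\mu^{-1}v^2]\,dx$ be the energy appearing in \eqref{varpb}. The hypotheses $\int f^2<\infty$, $\int(f'''')^2<\infty$ put $f$ in $W^{4,2}(-\infty,\infty)$, so $f$ is admissible in problem \eqref{varpb} \& \eqref{compfcn}; minimality of $u$ gives $\sum_{k=1}^{N}[u(x_k)-g_k]^2+\la\|u\|_\mu^2\le\sum_{k=1}^{N}[f(x_k)-g_k]^2+\la\|f\|_\mu^2\le N\eps^2+\la\|f\|_\mu^2$, while a one-line rescaling turns the smoothness hypothesis into $\|f\|_\mu^2=\mu^3\bigl(\mu^{-4}\!\int f^2+\mu^4\!\int(f'''')^2\bigr)\le2\bigl(\mu/(b-a)\bigr)^3E^2$. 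First I would read off $\sum_k[u(x_k)-g_k]^2\le N\eps^2+2\la(\mu/(b-a))^3E^2$ and $\|u\|_\mu^2\le N\eps^2/\la+2(\mu/(b-a))^3E^2$.

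Next I would feed in the coupling hypothesis, which is exactly the statement that $\la(\mu/(b-a))^3E^2\asymp N\eps^2$; it collapses the two bounds to $\sum_k[u(x_k)-g_k]^2=O(N\eps^2)$ and $\|u\|_\mu=O((\mu/(b-a))^{3/2}E)$. Adding $|f(x_k)-g_k|\le\eps$ and the same bound for $\|f\|_\mu$, the error inherits $\max_k|w(x_k)|=O(\sqrt N\,\eps)$ and $\|w\|_\mu=O((\mu/(b-a))^{3/2}E)$. Plancherel makes $\mu^{2j-1}\|w^{(j)}\|_{L^2}^2\le\|w\|_\mu^2$ evident for $j=0,1,\dots,4$ (after substituting $s=\mu|\xi|$ it reduces to $s^{2j}\le1+s^8$), so $\|w^{(j)}\|_{L^2}=O(\mu^{1/2-j}(\mu/(b-a))^{3/2}E)$; in particular $\|w''''\|_{L^2}=O(\mu^{-2}(b-a)^{-3/2}E)$.

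The heart of the argument is to convert ``small on the grid, with controlled $w''''$'' into ``small in $C^1([a,b])$''. For this I would invoke a sampling inequality: with fill distance $h=(b-a)/(N-1)$, a Bramble--Hilbert estimate built on a local quasi-interpolant that reproduces cubics yields, for $h$ small and all $v\in W^{4,2}$, $\|v\|_{L^\infty([a,b])}\le C\bigl(h^{7/2}\|v''''\|_{L^2}+\max_k|v(x_k)|\bigr)$ and $\|v'\|_{L^\infty([a,b])}\le C\bigl(h^{5/2}\|v''''\|_{L^2}+h^{-1}\max_k|v(x_k)|\bigr)$. Applying this to $w$, and using that the coupling forces $\mu$ to be of the order of $b-a$ (which the recipe \eqref{apprmu} in any case arranges), the fourth-derivative contributions become $O((N-1)^{-7/2}E)$ and $O((N-1)^{-5/2}E)$, hence $O(\de^{7/4}E)$ and $O(\de^{5/4}E)$ via $(N-1)^{-2}\le\de$, and so are of lower order. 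The dominant pieces are the data ones: $\sqrt N\,\eps=\sqrt{N-1}\,(1-1/N)^{-1/2}\eps\le\sqrt{N-1}\,\de\,E$; and because writing $\de$ as a maximum is precisely the instruction to balance $N$ against $\eps$, i.e. $(N-1)^{-2}\asymp\eps/E$ and hence $N-1\asymp\de^{-1/2}$, this is $O(\de^{3/4}E)$, while $(b-a)h^{-1}\sqrt N\,\eps\asymp(N-1)\sqrt N\,\eps=O(\de^{1/4}E)$. That yields the two claimed estimates.

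The step I expect to be the main obstacle is the third one, in two respects. On the analytic side, one must establish the sampling inequality with exactly the powers $h^{7/2}$, $h^{5/2}$ (and $h^{-1}$ for the derivative), so that the fourth-derivative bounds just produced get absorbed; this is standard but needs care at the two endpoints of $[a,b]$. On the bookkeeping side, one must check that the four scales $\la$, $\mu$, $h$, $b-a$ genuinely recombine into powers of the single quantity $\de$ --- in particular that, under the coupling together with $\de\to0$, one is in the regime $N-1\asymp\de^{-1/2}$, which is what turns $\sqrt N\,\eps$ into $O(\de^{3/4}E)$ and $h^{-1}\sqrt N\,\eps$ into $O(\de^{1/4}E/(b-a))$.
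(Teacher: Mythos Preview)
Your variational opening is fine and lands where the paper's does: from minimality you obtain
\[
\sum_{k=1}^N [w(x_k)]^2 = O(N\eps^2),\qquad \|w\|_\mu^2 = O\!\left(\Big(\tfrac{\mu}{b-a}\Big)^3 E^2\right),
\]
with $w=f-u$. The gap is in how you convert nodal control into $C^1$ control.

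You pass from the $\ell^2$ nodal bound to $\max_k|w(x_k)|\le\bigl(\sum_k w(x_k)^2\bigr)^{1/2}=O(\sqrt N\,\eps)$ and then try to cancel the extra $\sqrt N$ by asserting $N-1\asymp\de^{-1/2}$. That balance is \emph{not} a hypothesis of the theorem: writing $\de$ as a maximum only says each of the two quantities is $\le\de$, not that they are comparable. Nothing prevents $(N-1)^{-2}\ll\eps/E$; in that regime $\de\approx\eps/E$ while $N-1\gg\de^{-1/2}$, so $\sqrt N\,\eps\gg\de^{3/4}E$ and your $L^\infty$ bound fails outright. The same defect appears in the derivative estimate through the term $h^{-1}\max_k|w(x_k)|$. (Your other assumption, $\mu\asymp b-a$, is also not a consequence of the coupling --- that is one relation among $\la,\mu,N,\eps,E$ and leaves $\mu/(b-a)$ free --- but this is secondary to the $\sqrt N$ loss.)

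The paper avoids the loss by never passing to $\max_k|w(x_k)|$. It first bounds the \emph{$L^2$} norm of $w$ on $[a,b]$ by a sampling inequality of the form
\[
\Big(\int_a^b w^2\Big)^{1/2}\ \le\ (\Delta x)^{1/2}\Big(\sum_k w(x_k)^2\Big)^{1/2}\ +\ C\,(\Delta x)^{2}\Big(\int_a^b (w'')^2\Big)^{1/2},
\]
whose $\ell^2$-weighted data term matches $\sum_k w(x_k)^2=O(N\eps^2)$ exactly and gives $(b-a)^{-1}\!\int_a^b w^2=O(E^2\de^2)$ with no spurious $N$. Only then does a Gagliardo--Nirenberg interpolation between $\|w\|_{L^2}$ and $\|w''\|_{L^2}$ deliver the $L^\infty$ bounds with the exponents $3/4$ and $1/4$. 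If you want to rescue your route, replace $\max_k|w(x_k)|$ in your sampling inequality by the $(\Delta x)^{1/2}$-weighted $\ell^2$ sum and aim at $\|w\|_{L^2}$ first; a direct $L^\infty$ sampling estimate with $\max_k$ on the right cannot work under the stated hypotheses.
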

\begin{proof}
Let
\begin{equation}
\label{defv}
v=f-u.
\end{equation}
\par
Functional $J,$ whose domain is $W^{4,2}(-\infty,\infty)$ and whose value at any trial function $\fhi$ obeys
$$
J(\fhi)=\sums_{k=1}^N [\fhi(x_k)-g_k]^2+\la\,\ints_{-\infty}^\infty \left[\mu^7 (\fhi'''')^2+\mu^{-1} \fhi^2\right]\,dx,
$$
attains its minimum value at $u.$ Consequently,
$$
J(f)=J(u)+(\mbox{a remainder}),
$$
$$
\mbox{remainder }= \sums_{k=1}^N[v(x_k)]^2+\la\,\ints_{-\infty}^\infty \left[\mu^7 (v'''')^2+\mu^{-1} v^2\right]\,dx.
$$
Since
\begin{eqnarray*}
&\ds J(f)\le N \eps^2+2\la\,\left(\frac{\mu}{b-a}\right)^3 E^2,\\
& J(u)\ge 0,\\
& \ints_{-\infty}^\infty \left[\mu^7 (v'''')^2+\mu^{-1} v^2\right]\,dx\ge
2\,\mu^3  \ints_{-\infty}^\infty (v'')^2 dx,
\end{eqnarray*}
we infer
\begin{equation}
\label{est1}
\sums_{k=1}^N[v(x_k)]^2+2\la\mu^3\ints_{-\infty}^\infty (v'')^2 dx\le
E^2\left\{N\,\left(\frac{\eps}{E}\right)^2+2\la\,\left(\frac{\mu}{b-a}\right)^3\right\}.
\end{equation}
\par
Combining inequality \eqref{est1} and Lemma \ref{lm:1} below results in
\begin{eqnarray*}
&&(b-a)^{-1} \ints_a^b v^2 dx\le\\
&&
\quad E^2\left\{\frac{N}{N-1}+\frac1{\pi^4(N-1)^4}\,\frac{N}{2\la}\left(\frac{\mu}{b-a}\right)^{-3}\right\}
\left\{\left(\frac{\eps}{E}\right)^2+\frac{2\la}{N}\,\left(\frac{\mu}{b-a}\right)^3\right\};
\end{eqnarray*}
inequality \eqref{est1} also yields
$$
(b-a)^3 \ints_a^b (v'')^2 dx\le
E^2\,\left\{ 1+\frac{N}{2\la}\,\left(\frac{\mu}{b-a}\right)^{-3}\,\left(\frac{\eps}{E}\right)^2\right\}.
$$
\par
The last two inequalities and a hypothesis imply
\begin{eqnarray}
&(b-a)^{-1} \ints_a^b v^2 dx\le (\mbox{Const.})\, E^2\,\de^2,\label{est2a}\\
&(b-a)^3 \ints_a^b (v'')^2 dx\le (\mbox{Const.})\, E^2.\label{est2b}
\end{eqnarray}
The conclusions follow from \eqref{defv}, \eqref{est2a} and \eqref{est2b}, by virtue of Lemma \ref{lm:2} below.
\end{proof}

\begin{figure}[tbh]
\label{fig:1}
\centerline{
{\epsfxsize=3.5in
\epsfbox{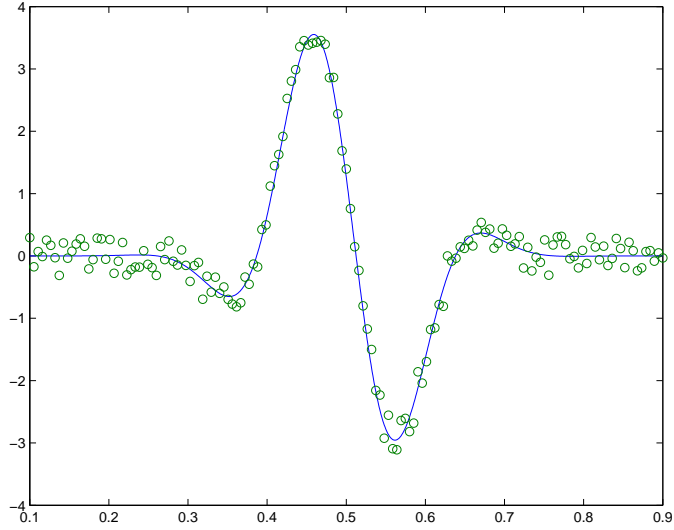}}
}
\caption{Original function (dotted), and samples polluted by noise (circled).}
\end{figure}

\begin{lm}
\label{lm:1}
Let $-\infty<a<b<\infty,$ and $N=2, 3, \dots;$ let
$$
\De x=\frac{b-a}{N-1}, \ \ x_k=a+(k-1)\,\De x \ \ (k=1,\dots, N).
$$
The following inequality
$$
\left\{\ints_a^b v^2 dx\right\}^{1/2}\le (\De x)^{-1/2}\left\{\sums_{k=1}^N v(x_k)^2 dx\right\}^{1/2}+
\left(\frac{\De x}{\pi}\right)^2\left\{\ints_a^b (v'')^2 dx\right\}^{1/2}
$$
holds for every $v$ from $W^{2,2}(a,b).$
\end{lm}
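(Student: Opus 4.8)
The plan is to compare $v$ with its continuous piecewise-linear interpolant at the nodes and to control the interpolation error by the sharp second-order Poincar\'e inequality on each mesh interval. Since $W^{2,2}(a,b)\hookrightarrow C^1([a,b])$, the values $v(x_k)$ make sense; let $\ell$ be the continuous function that is affine on each $[x_k,x_{k+1}]$ with $\ell(x_k)=v(x_k)$ for $k=1,\dots,N$, and set $w=v-\ell$. Then $w$ is continuous on $[a,b]$, vanishes at every node, and on each open mesh interval belongs to $W^{2,2}$ with $w''=v''$ a.e.\ (because $\ell$ is linear there). By the triangle inequality in $L^2(a,b)$,
$$
\Bigl\{\ints_a^b v^2\,dx\Bigr\}^{1/2}\le\Bigl\{\ints_a^b \ell^2\,dx\Bigr\}^{1/2}+\Bigl\{\ints_a^b w^2\,dx\Bigr\}^{1/2},
$$
so it suffices to bound the two summands on the right separately.

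First I would estimate the interpolant. On $[x_k,x_{k+1}]$ an elementary computation gives $\int_{x_k}^{x_{k+1}}\ell^2\,dx=\tfrac{\De x}{3}\bigl(v(x_k)^2+v(x_k)v(x_{k+1})+v(x_{k+1})^2\bigr)\le\tfrac{\De x}{2}\bigl(v(x_k)^2+v(x_{k+1})^2\bigr)$, using $2|ab|\le a^2+b^2$. Summing over $k=1,\dots,N-1$, where each $v(x_k)^2$ occurs at most twice,
$$
\ints_a^b \ell^2\,dx\le \De x\,\sum_{k=1}^N v(x_k)^2 .
$$
Taking square roots, this is precisely the first summand on the right of the Lemma, which should therefore read $(\De x)^{1/2}\{\sum_k v(x_k)^2\}^{1/2}$; the printed exponent $-1/2$ is a typographical slip, as the scaling test $v\equiv\mathrm{const}$ immediately reveals.

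Next I would bound $w$ interval by interval, the only non-routine ingredient being the sharp estimate $\int_I\varphi^2\le(|I|/\pi)^4\int_I(\varphi'')^2$, valid for every $\varphi\in W^{2,2}(I)$ vanishing at both endpoints of an interval $I$. I would prove it by expanding $\varphi$ in the complete orthonormal sine basis of $L^2(I)$ adapted to $I$, namely $e_j(x)=\sqrt{2/|I|}\,\sin(j\pi(x-a_I)/|I|)$ with $a_I$ the left endpoint: two integrations by parts, all boundary terms killed by $\varphi(\partial I)=e_j(\partial I)=0$, give $\langle\varphi'',e_j\rangle=-(j\pi/|I|)^2\langle\varphi,e_j\rangle$, whence Bessel's inequality for $\varphi''$ together with completeness for $\varphi$ yields $\|\varphi\|_{L^2(I)}^2=\sum_j\langle\varphi,e_j\rangle^2\le(|I|/\pi)^4\sum_j(j\pi/|I|)^4\langle\varphi,e_j\rangle^2=(|I|/\pi)^4\sum_j\langle\varphi'',e_j\rangle^2\le(|I|/\pi)^4\|\varphi''\|_{L^2(I)}^2$. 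Applying this with $\varphi=w$ and $I=[x_k,x_{k+1}]$, using $w''=v''$, and summing over $k$,
$$
\ints_a^b w^2\,dx\le\Bigl(\frac{\De x}{\pi}\Bigr)^4\ints_a^b (v'')^2\,dx .
$$
Taking square roots and substituting the last two displays into the triangle inequality completes the proof. I expect the sharp Poincar\'e constant $(\pi/\De x)^4$ to be the one point a careful reader would want spelled out; everything else is bookkeeping, and the Fourier argument above is the cleanest route to it, since it uses only $L^2$-orthogonality and Bessel's inequality applied to $\varphi''$, and never needs convergence of the sine series of $\varphi$ in $W^{2,2}$.
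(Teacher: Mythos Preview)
The paper explicitly omits the proof of this lemma (``The proofs of Lemma~\ref{lm:1} and~\ref{lm:2} are beyond the scope of the present paper, and are omitted''), so there is nothing to compare your argument against. Your proof is correct and self-contained: the decomposition into piecewise-linear interpolant plus remainder, the elementary bound $\int_a^b\ell^2\,dx\le\Delta x\sum_k v(x_k)^2$, and the sharp interval Poincar\'e inequality $\|\varphi\|_{L^2(I)}\le(|I|/\pi)^2\|\varphi''\|_{L^2(I)}$ for $\varphi$ vanishing at the endpoints are all verified cleanly. The Fourier-sine argument for the last inequality is indeed the natural one, and your remark that Bessel suffices for $\varphi''$ while Parseval is used for $\varphi$ is exactly right.

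Your identification of the typo is also correct and worth recording. The printed exponent $(\Delta x)^{-1/2}$ must be $(\Delta x)^{+1/2}$: dimensional analysis (or your constant-function test) shows this, and more conclusively, the paper's own application of the lemma in the proof of Theorem~C.1 only yields the stated bound
\[
(b-a)^{-1}\int_a^b v^2\,dx\le E^2\Bigl\{\tfrac{N}{N-1}+\tfrac{1}{\pi^4(N-1)^4}\tfrac{N}{2\lambda}\bigl(\tfrac{\mu}{b-a}\bigr)^{-3}\Bigr\}\Bigl\{\bigl(\tfrac{\varepsilon}{E}\bigr)^2+\tfrac{2\lambda}{N}\bigl(\tfrac{\mu}{b-a}\bigr)^3\Bigr\}
\]
if one starts from the corrected form and then applies Cauchy--Schwarz with weights $1$ and $2\lambda\mu^3$ to the two summands. (There is a second, harmless typo in the statement: the stray ``$dx$'' inside the finite sum.)
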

\begin{lm}
\label{lm:2}
Suppose $v$ is in $W^{2,2}(a,b),$ and
$$
(b-a)^{-1/2}\left\{\ints_a^b v^2 dx\right\}^{1/2}=\nr v\nr, \ \ (b-a)^{3}\ints_a^b (v'')^2 dx=1.
$$
The following inequalities hold
\begin{eqnarray*}
&\max|v|\le 2^{1/4}\, 3^{-3/8} \nr v\nr^{3/4}+O(\nr v\nr),\\
&(b-a)\,\max|v'|\le 2^{1/4}\, 3^{-3/8} \nr v\nr^{1/4}+O(\nr v\nr).
\end{eqnarray*}
\end{lm}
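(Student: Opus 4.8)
The plan is first to strip out all the $(b-a)$-dependence by the affine substitution $x=a+(b-a)t$: if $V(t)=v(a+(b-a)t)$ then $\int_0^1V^2\,dt=\|v\|^2$, $\int_0^1(V'')^2\,dt=1$, $\max|V|=\max|v|$ and $\max|V'|=(b-a)\max|v'|$, so the whole lemma reduces to proving, for $V\in W^{2,2}(0,1)$ with $\|V\|_{L^2(0,1)}=\varepsilon:=\|v\|$ and $\|V''\|_{L^2(0,1)}=1$, the two bounds $\max_{[0,1]}|V|\le 2^{1/4}3^{-3/8}\varepsilon^{3/4}+O(\varepsilon)$ and $\max_{[0,1]}|V'|\le 2^{1/4}3^{-3/8}\varepsilon^{1/4}+O(\varepsilon)$. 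These are Gagliardo--Nirenberg interpolation inequalities; the powers $3/4$ and $1/4$ are forced by the scaling $t\mapsto\sigma t$, and the real content is the \emph{sharp} constant.

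The core is the corresponding sharp estimate on the whole line, which is elementary via Fourier transform and a weighted Cauchy--Schwarz. For $w\in W^{2,2}(\RR)$ and any $\lambda>0$ one has $|w(x_0)|=\tfrac1{2\pi}\bigl|\int e^{ix_0\xi}\widehat w(\xi)\,d\xi\bigr|\le\tfrac1{2\pi}\int|\widehat w(\xi)|\,d\xi\le\tfrac1{2\pi}\Bigl(\int|\widehat w(\xi)|^2(1+\lambda\xi^4)\,d\xi\Bigr)^{1/2}\Bigl(\int\frac{d\xi}{1+\lambda\xi^4}\Bigr)^{1/2}$; by Plancherel the first factor is $\bigl(2\pi(\|w\|_2^2+\lambda\|w''\|_2^2)\bigr)^{1/2}$, while $\int(1+\lambda\xi^4)^{-1}\,d\xi=\tfrac{\pi}{\sqrt2}\lambda^{-1/4}$, so $|w(x_0)|\le 2^{-3/4}\lambda^{-1/8}\bigl(\|w\|_2^2+\lambda\|w''\|_2^2\bigr)^{1/2}$; minimising in $\lambda$ (optimum $\lambda=\|w\|_2^2/(3\|w''\|_2^2)$, making $\|w\|_2^2+\lambda\|w''\|_2^2=\tfrac43\|w\|_2^2$) gives $|w(x_0)|\le 2^{1/4}3^{-3/8}\|w\|_2^{3/4}\|w''\|_2^{1/4}$. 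The same computation with $|\xi|\,|\widehat w|$ in place of $|\widehat w|$, using $\int\xi^2(1+\lambda\xi^4)^{-1}\,d\xi=\tfrac{\pi}{\sqrt2}\lambda^{-3/4}$ and optimal $\lambda=3\|w\|_2^2/\|w''\|_2^2$, yields $|w'(x_0)|\le 2^{1/4}3^{-3/8}\|w\|_2^{1/4}\|w''\|_2^{3/4}$. Both constants coincide and equal the one in the statement, which is reassuring.

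It then remains to transfer these to the bounded interval. I would localise at a point $x_0$ where $|V|$ (respectively $|V'|$) is maximal: since the near-extremal profile above is concentrated at the length scale $\lambda^{1/4}\sim\|v\|^{1/2}$, one multiplies $V$ by a smooth cut-off equal to $1$ near $x_0$, supported in $(0,1)$, and varying over a scale that tends to $0$ but more slowly than $\|v\|^{1/2}$; extending by zero produces a $W^{2,2}(\RR)$ function whose $L^2$-norm and second-derivative $L^2$-norm differ from those of $V$ only by quantities that are of lower order than the main terms $\|v\|^{3/4}$, $\|v\|^{1/4}$ (recall $\|V''\|_{L^2(0,1)}$ is normalised to be of order one). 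Feeding this into the whole-line inequalities and undoing the first scaling gives the claim, the cut-off and finite-interval corrections being absorbed into the $O(\|v\|)$ remainder.

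The step I expect to be the genuine obstacle is precisely this last one: controlling the behaviour near the end points of $(a,b)$ so that the \emph{full-line} constant $2^{1/4}3^{-3/8}$ is actually recovered and not enlarged. A cut-off contributes extra curvature, and the worst location for the extremum is up against an end point, where the relevant one-dimensional Green's function on the diagonal is bigger than on the line; showing that this excess is of the claimed lower order — and, if need be, pinning the true order of the remainder rather than the crude $O(\|v\|)$ — is where the delicate work sits. The remaining ingredients, namely the scaling reductions and the two Fourier estimates, are routine if somewhat computational.
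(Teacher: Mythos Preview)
The paper does not actually prove this lemma: immediately after stating Lemmas~\ref{lm:1} and~\ref{lm:2} the authors write ``The proofs of Lemma~\ref{lm:1} and~\ref{lm:2} are beyond the scope of the present paper, and are omitted.'' So there is no proof in the paper to compare your attempt against.

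On the substance of what you wrote: the scaling reduction and the two whole-line Fourier computations are correct, and they do produce exactly the constant $2^{1/4}3^{-3/8}$ that appears in the statement. The only genuinely open piece is the one you flag yourself, namely the transfer from $\RR$ to the finite interval with the stated remainder $O(\|v\|)$. Your cut-off idea does go through at interior points: if the cut-off varies over a length $\eta$, the extra curvature it contributes to $\|(V\chi)''\|_2$ is of order $\eta^{-1}\|V'\|_2+\eta^{-2}\|V\|_2\sim\eta^{-1}\varepsilon^{1/2}+\eta^{-2}\varepsilon$ (using the elementary interpolation $\|V'\|_{L^2(0,1)}^2\lesssim\|V\|_2\|V''\|_2+\|V\|_2^2$), so choosing $\eta\sim\varepsilon^{1/4}$ makes the correction to the main term of order $\varepsilon$, as required. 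The endpoint case --- extremum within $\eta$ of $0$ or $1$ --- is the real nuisance; a clean way to handle it is to first apply a bounded $W^{2,2}$-extension operator (higher-order reflection, say) to push $V$ onto a slightly larger interval, then cut off there. The extension inflates $\|V\|_2$ and $\|V''\|_2$ only by fixed factors, and since the extremal profile is localised on scale $\varepsilon^{1/2}\ll\eta$, the portion of the extended function outside $(0,1)$ contributes only lower-order terms; one checks the leading constant is unaffected. That bookkeeping is tedious but not deep, and once it is written out your argument is complete.
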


\begin{figure}[tbh]
\label{fig:4}
\centerline{
{\epsfxsize=3.5in
\epsfbox{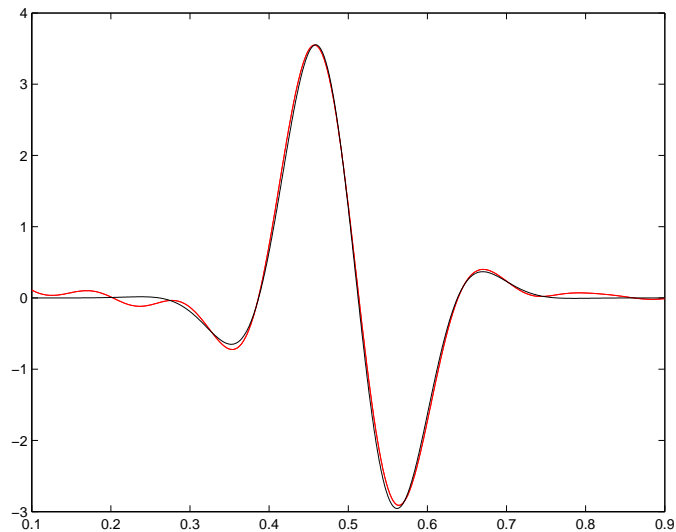}}
}
\caption{Original and recovered functions.}
\end{figure}

\begin{figure}[tbh]
\label{fig:5}
\centerline{
{\epsfxsize=3.5in
\epsfbox{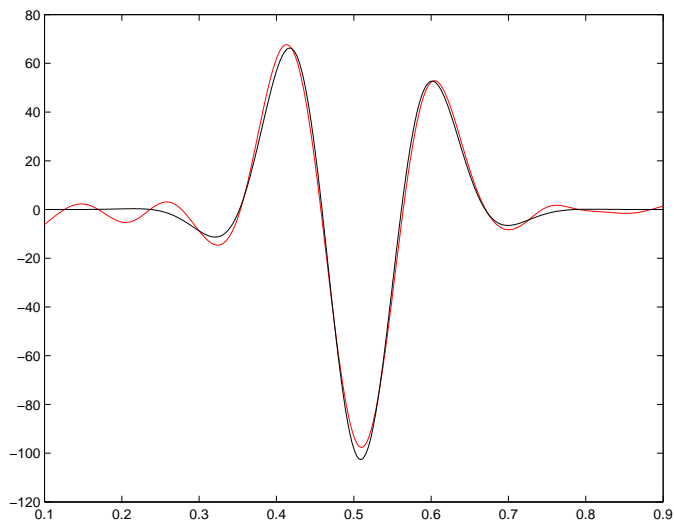}}
}
\caption{Original and recovered derivatives.}
\end{figure}

\par
The proofs of Lemma \ref{lm:1} and \ref{lm:2} are beyond the scope of the present paper, and are omitted.

\subsection{} Here is an example, demonstrating how the present method works.
Let
\begin{eqnarray*}
&a=0, \  b=1,\\
&f(x)=\exp\left[-72\,(x-\frac12)^2)\right][\cos(25 x)-4\,\sin(25 x)],\\
&N=150,\\
&g_k=f(x_k)\pm (\mbox{$5\%$ random noise}) \quad (k=1,\dots, N).
\end{eqnarray*}
Let parameter $\la$ obeys
$$
\la=10^{-6},
$$
let parameter $\mu$ be given by formula \eqref{apprmu},
and let $u$ solve problem \eqref{varpb} \& \eqref{compfcn}. 
Figure 13 plots $g_1, g_2,\dots, g_N$  versus $x_1, x_2,\dots, x_N;$ 
figure 14 plots $f$ and $u,$ and figure 15 plots $f'$ and $u'.$

\end{document}